\newcommand{\Rni}{\mathbb{R}^{n_{i}}}
\newcommand{\nfR}{\mathbb{R}}
\newcommand{\xpi}{x_i}
\newcommand{\xmi}{x_{-i}}
\newcommand{\allx}{x}
\newcommand{\fpi}{f_{i}}
\newcommand{\lip}{\left<}
\newcommand{\rip}{\right>}
\newcommand{\idl}{\mbox{Ideal}}
\newcommand{\qmod}{\mbox{Qmod}}
\newcommand{\cvx}{\mbox{conv}}
\newcommand{\ddd}{,\ldots,}
\DeclareMathOperator{\Rank}{rank}
\newcommand{\re}{\mathbb{R}}
\newcommand{\cpx}{\mathbb{C}}
\newcommand{\N}{\mathbb{N}}
\def\af{\alpha}
\newcommand{\st}{\mathit{s.t.}}
\newcommand{\reff}[1]{(\ref{#1})}
\newcommand{\lmd}{\lambda}
\newcommand{\pt}{\partial}
\newcommand{\dt}{\delta}
\newcommand{\nn}{\nonumber}
\newcommand{\eps}{\epsilon}
\newcommand{\mbf}[1]{\mathbf{#1}}
\newcommand{\mcal}[1]{\mathcal{#1}}
\def\rank{\mbox{rank}}
\newcommand{\bdes}{\begin{description}}
\newcommand{\edes}{\end{description}}
\newcommand{\bal}{\begin{align}}
\newcommand{\eal}{\end{align}}
\newcommand{\bnum}{\begin{enumerate}}
\newcommand{\enum}{\end{enumerate}}
\newcommand{\bit}{\begin{itemize}}
\newcommand{\eit}{\end{itemize}}
\newcommand{\bea}{\begin{eqnarray}}
\newcommand{\eea}{\end{eqnarray}}
\newcommand{\be}{\begin{equation}}
\newcommand{\ee}{\end{equation}}
\newcommand{\baray}{\begin{array}}
\newcommand{\earay}{\end{array}}
\newcommand{\bsry}{\begin{subarray}}
\newcommand{\esry}{\end{subarray}}
\newcommand{\bca}{\begin{cases}}
\newcommand{\eca}{\end{cases}}
\newcommand{\bcen}{\begin{center}}
\newcommand{\ecen}{\end{center}}
\newcommand{\bbm}{\begin{bmatrix}}
\newcommand{\ebm}{\end{bmatrix}}
\newcommand{\btab}{\begin{tabular}}
\newcommand{\etab}{\end{tabular}}
\numberwithin{equation}{section}
\theoremstyle{definition}
\newtheorem{definition}{Definition}[section]
\theoremstyle{plain}
\newtheorem{prop}[definition]{Proposition}
\newtheorem{corollary}[definition]{Corollary}
\newtheorem{theorem}[definition]{Theorem}
\newtheorem{example}[definition]{Example}
\newtheorem{alg}[definition]{Algorithm}
\theoremstyle{remark}
\newtheorem{remark}[definition]{Remark}
\numberwithin{equation}{section}
\begin{document}

\title{Nash Equilibrium Problems of Polynomials}
\author[Jiawang Nie]{Jiawang~Nie}
\address{Jiawang Nie, Department of Mathematics,
University of California San Diego,
9500 Gilman Drive, La Jolla, CA, USA, 92093.}
\email{njw@math.ucsd.edu}

\author[Xindong Tang]{Xindong~Tang}
\address{Xindong Tang, Department of Applied Mathematics, The Hong Kong Polytechnic University,
Hung Hom, Kowloon, Hong Kong.}
\email{xindong.tang@polyu.edu.hk}

\keywords{Nash equilibrium, Polynomial Optimization, Moment-SOS relaxation,
Lagrange multiplier expression, Tight relaxation}
\subjclass[2000]{90C23,90C33,91A10,65K05}

\begin{abstract}
This paper studies Nash equilibrium problems that
are given by polynomial functions.
We formulate efficient polynomial optimization problems
for computing Nash equilibria. The Moment-SOS relaxations
are used to solve them. Under generic assumptions,
the method can find a Nash equilibrium if there is one.
Moreover, it can find all Nash equilibria
if there are finitely many ones of them.
The method can also detect nonexistence if there is no Nash equilibrium.
\end{abstract}

\maketitle

\section{Introduction}

The Nash equilibrium problem (NEP) is a kind of games for finding strategies
for a group of players such that each player's objective is optimized,
for given other players' strategies.
Suppose there are $N$ players, and the $i$th player's strategy
is the variable $x_{i} \in \re^{n_{i}}$ (the $n_i$-dimensional real Euclidean space).
We denote that
\[
x_i\,  \coloneqq  \, (x_{i,1}, \ldots, x_{i,n_i}), \quad
\allx \,  \coloneqq  \, (x_1, \ldots, x_N).
\]
The total dimension of all players' strategies is
\[
n : \, = \, n_1 + \cdots + n_N.
\]
When the $i$th player's strategy $x_i$ is being optimized,
we use $x_{-i}$ to denote the subvector
of all players' strategies except $x_i$, i.e.,
\[
x_{-i} \,  \coloneqq  \, (x_1, \ldots, x_{i-1}, x_{i+1}, \ldots, x_N),
\]
and write $\allx = (x_i, x_{-i})$ accordingly.
When the writing $x_{-i}$ appears,
 the $i$th player's strategy is being considered for optimization,
while the vector of all other players' strategies is fixed to be $\xmi$.
In an NEP, the $i$th player's best strategy $x_i$
is the minimizer for the optimization problem
\be
\label{eq:subprob}
\mbf{F}_i(x_{-i}):
\left\{ \begin{array}{cl}
\min\limits_{\xpi\in \Rni}  &  \fpi(\xpi,\xmi) \\
\st & g_{i,j}(\xpi) = 0 \ (j \in \mcal{E}_i), \\
    & g_{i,j}(\xpi) \geq 0 \ (j \in \mcal{I}_i),
\end{array} \right.
\ee
for the given other players' strategies $\xmi$.
In the above, $f_i$ is the $i$th player's objective function,
and $g_{i,j}$ are constraining functions in $\xpi$.
The $\mcal{E}_i$ and $\mcal{I}_i$ are disjoint labeling sets
of finite cardinalities (possibly empty).
The feasible set of the optimization $\mbf{F}_i(x_{-i})$ in \reff{eq:subprob} is
\be \label{eq:feaset}
X_i \,  \coloneqq  \,
\{x_i \in \Rni: \, g_{i,j}(\xpi) = 0\, (j \in \mcal{E}_i), \,
g_{i,j}(\xpi) \geq 0 \, (j \in \mcal{I}_i)  \} .
\ee
For NEPs, each set $X_i$ does not depend on $x_{-i}$.
This is different from generalized Nash equilibrium problems (GNEPs),
where each player's feasible set depends on other players' strategies.
We say the strategy vector $x$ is feasible if
\[
 \allx   = (x_1, \ldots, x_N) \, \in \, X \,  \coloneqq  \,
X_1 \times \cdots \times X_n.
\]
That is, each $x_i \in X_i$.
The NEP can be formulated as
\be
\label{eq:NEP}
\mbox{find} \quad \allx^*  \in\nfR^{n}
\quad \mbox{such that each}
\,\,\xpi^* \,\, \mbox{is a minimizer of} \, \mbf{F}_i(x_{-i}^*),
\ee
where $\allx^* = (x_1^*, \ldots, x_N^*)$.
A solution of \reff{eq:NEP} is called a {\it Nash equilibrium} (NE)\footnote{
In some literature, this is also called a {\it pure strategy Nash equilibrium},
in contrast to {\it mixed strategy Nash equilibria},
which are probability measures supported on the feasible strategy sets.
We refer to Section~\ref{sc:conc} for more details on mixed strategy NEs;
also see \cite{dresher2016polynomial,kroupa2023multiple,Nash1951,
parrilo2006polynomial,stein2008separable,Young2014}.
}.
When the defining functions $f_i$ and $g_{i,j}$ are continuous,
then the NEP is called a {\it continuous Nash equilibrium problem}.
In this paper, we consider cases that each $f_i$ is a polynomial in
$\allx$ and $g_{i,j}$'s are polynomials in $x_i$.
Such an NEP is called a {\it Nash equilibrium problem of polynomials} (NEPP).
The following is an example.
\begin{example}
\label{ep:isolatedep}  \rm
Consider the $2$-player NEP with the individual optimization
\[
\baray{l}
\mbox{1st player:}  \left\{
  \begin{array}{rl}
    \min\limits_{x_1 \in \re^2} &
     x_{1,1}(x_{1,1}+x_{2,1}+4x_{2,2})+2x_{1,2}^2 \\
 \st & 1-(x_{1,1})^2-(x_{1,2})^2 \ge 0,
  \end{array}\right.
\\
\mbox{2nd player:}   \left\{
  \begin{array}{rl}
    \min\limits_{x_2 \in \re^2} & x_{2,1}(x_{1,1}+2x_{1,2}+x_{2,1})+x_{2,2}(2x_{1,1}+x_{1,2}+x_{2,2}) \\
    \st & 1 -(x_{2,1})^2 -(x_{2,2})^2 \ge 0.
  \end{array}\right.
\earay
\]
In this NEP, each player's objective is strictly convex with respect to its strategy,
because their Hessian matrices with respect to their own strategies are positive definite.
This NEP has only $3$ NEs (see Section~\ref{ssc:convex}), which are
\[
\boxed{
\begin{array}{rll}
\mbox{1st NE:} & x_1^* = (0, 0), & x_2^* = (0, 0); \\ \hline
\mbox{2nd NE:} & x_1^* = (1, 0), & x_2^* = \frac{1}{\sqrt{5}} (-1, -2 ) ;\\ \hline
\mbox{3rd NE:} & x_1^* = (-1, 0), & x_2^* = \frac{1}{\sqrt{5}}(1, 2 ) .
\end{array} }
\]
\end{example}

NEPs are challenging problems to solve.
Even for the special cases where each player's objective function
is multilinear in $(x_1\ddd x_N)$, and each feasible set is a simplex,
finding an NE is PPAD-complete \cite{DasGoldPap09}.
The problem becomes more difficult when players' optimization problems are nonconvex.
This is because an NE $\allx^* = (x_1^*, \ldots, x_N^*)$ requires that each $x_i^*$
is a global minimizer of $\mbf{F}_i(x_{-i}^*)$.
Indeed, finding a global minimizer of a single polynomial optimization problem
is already NP-hard \cite{LasBk15}.
For polynomial optimization problems, global optimizers can be computed efficiently
by the Moment-SOS hierarchy of semidefinite relaxations
(see \cite{Las01,LasBk15,Lau09} for related work).
Moreover, for some NEPs, there may not exist any NE.
Such NEPs are also interesting and have important applications
(e.g., NEPs in generative adversarial networks \cite{Farnia2020}).
If an NE does not exist, how can we detect its nonexistence?
This question is mostly open for general NEPs,
to the best of the author's knowledge.
However, under certain nonsingularity conditions,
nonexistence of NEs for NEPPs can be certified by the infeasibility
of some semidefinite programs.
For the above reasons, this paper focuses on NEPPs.

NEPs are generalizations of {\it finite games} \cite{Nash1951},
where each $X_i$ is a finite set, i.e., $|X_i|<\infty$.
In recent years, there has been an increasing
number of applications of NEPs in various fields,
such as economics,
environmental protection,
politics,
supply chain management,
machine learning, etc.
We refer to \cite{breton2006game,Contreras2004,Farnia2020,Goodfellow2014,maskin1999,Schofield2002}
for some recent applications of NEPs.
In Section~\ref{sc:num}, we present some concrete applications of NEPs
in environmental pollution control and the electricity market.
Moreover, we refer to surveys \cite{Aubin13,Young2014} for more general work on NEPs.

In this paper, our primary goal is to find NEs for NEPs.
In the following, we review some previous work on solving NEPs.
The NEP is called a {\it zero-sum game} if the sum of objective functions
is identically equal to a constant.
Two-player zero-sum games are equivalent to
{\it saddle point problems}.
We refer to \cite{CLOu14,Nedic2009} for algorithms of
solving saddle point problems under convexity assumptions, and \cite{Nie2018saddle}
for the method of
solving nonconvex polynomial saddle point problems.
For finite games, finding mixed strategy solutions is a special case of NEPs of polynomials;
see \cite{Ahmadi2020,Datta10,Kontogiannis2006,Young2014} for some related approaches.
There exists work on mixed strategy solutions for continuous games,
see \cite{dresher2016polynomial,parrilo2006polynomial,stein2008separable}
for mixed strategy solutions to polynomial games,
and \cite{adam2021double,kroupa2023multiple}
for the recently developed multiple oracle algorithms.
For finding pure strategy solutions for general continuous NEPs,
we refer to techniques such as variational inequalities \cite{GurPang09,Kulkarni2012},
Nikaido-Isoda functions \cite{krawczyk2000,UryRub94},
and manifold optimization tools \cite{Ratliff2013}.
In most earlier work, convexity is often assumed for each player's optimization.
Moreover, NEPs are special cases of GNEPs \cite{FacKan10OR},
where each player's feasible set is dependent on other players' strategies.
For GNEPs given by polynomial functions,
the work \cite{Couzoudis2013} introduces a parametric SOS relaxation approach,
and the Gauss-Seidel method using Moment-SOS relaxations is studied in \cite{Nie2020gs}.
When the GNEPs are further assumed to be convex, the semidefinite relaxation method
is introduced in \cite{Nie2023convex}.
At the moment, it is mostly an open question to solve general NEPs,
especially when the players' optimization problems are nonconvex.

\subsection*{Contributions}

This paper focuses on Nash equilibrium problems that are given by polynomials.
We formulate efficient polynomial optimization for computing
one or more Nash equilibria.
The Moment-SOS hierarchy of semidefinite relaxations is used
to solve the appearing polynomial optimization problems.
Our major results are:

\begin{itemize}
\item Under some genericity assumptions, we prove that our method can compute
a Nash equilibrium if there exists one, or it can detect nonexistence of NEs.
Moreover, if there are only finitely many NEs, we show how to find all of them.
In the prior existing work, there do not exist similar methods that can
achieve such computational goals.

\item When the objective and constraining polynomials are generic
(i.e., they have generic coefficients), we show that the NEPP
has only finitely many KKT points.
For such generic NEPPs, our method can compute all NEs,
if they exist, or can detect their nonexistence.

\item When the objective and constraining polynomials are not generic,
our method can still be applied to compute one or more NEs,
or to detect their nonexistence.
Even if there are infinitely many NEs, our method may still be able to get an NE.
In computational practice, there is no need to check if the NEP is generic
or not to implement our algorithms.
In fact, our method is self-verifying, that in the actual implementation,
the algorithm can check whether the computed point is an NE,
and check if the computed solution set is complete or not.

\end{itemize}

%
%

The paper is organized as follows.
Some preliminaries about polynomial optimization
are given in Section~\ref{sc:pre}.
We give efficient polynomial optimization formulations
in Section~\ref{sc:finding}.
We show how to solve polynomial optimization problems
by the Moment-SOS hierarchy in Section~\ref{sc:solvepop}.
Numerical experiments and applications are given in Section~\ref{sc:num}.
Conclusions and discussions are proposed in Section~\ref{sc:conc}.
The finiteness of the KKT set for generic NEPs is showed in Appendix.

\section{Preliminaries}
\label{sc:pre}

\subsection*{Notation}
The symbol $\mathbb N$ (resp., $\mathbb R$, $\mathbb C$) stands
for the set of nonnegative integers (resp., real numbers, complex numbers).
For a positive integer $k$, denote the set $[k]  \coloneqq  \{1, \ldots, k\}$.
For a real number $t$, $\lceil t \rceil$ (resp., $\lfloor t \rfloor$)
denotes the smallest integer not smaller than $t$
(resp., the biggest integer not bigger than $t$).
For the $i$th player's strategy variable $x_i \in \re^{n_i}$, the $x_{i,j}$
denotes the $j$th entry of $x_i$, $j = 1, \ldots, n_i$.
The $\nfR[\allx]$ (resp., $\cpx[\allx]$)
denotes the ring of polynomials with real (resp., complex) coefficients in $\allx$.
The $\nfR[\allx]_d$ (resp., $\cpx[\allx]_d$)
denotes its subset of polynomials whose degrees are not greater than $d$.
For the $i$th player's strategy vector $x_i$,
the notation $\re[x_i]$, $\cpx[x_i]$, $\re[x_i]_d$, $\cpx[x_i]_d$
are defined in the same way.
For $i$th player's objective $f_i(x_i,x_{-i})$,
the notation $\nabla_{x_i}f_i$, $\nabla^2_{x_i}f_i$
respectively denote its gradient and Hessian with respect to $x_i$.

In the following, we use the letter $z$ to represent either $\allx$ or $x_i$
for the convenience of discussion. Suppose $z  \coloneqq  (z_1,\ldots,z_l)$
and $\af  \coloneqq  (\af_1, \ldots, \af_l) \in \N^{l}$, denote
\[
z^\alpha  \coloneqq  z_1^{\alpha_1} \cdots z_l^{\alpha_l}, \quad
|\alpha| \coloneqq \alpha_1+\ldots+\alpha_l.
\]
For an integer $d >0$, denote the monomial power set
\[
{\mathbb{N}}_d^l \,  \coloneqq  \,
\{\alpha\in {\mathbb{N}}^l: \, \ |\alpha| \le d \}.
\]
We use $[z]_d$ to denote the vector of all monomials in $z$
and whose degree is at most $d$, ordered in the graded alphabetical ordering.
For example, if $z =(z_1, z_2)$, then
\[
[z]_3 = (1,  z_1, z_2, z_1^2, z_1z_2,
z_2^2, z_1^3, z_1^2z_2, z_1z_2^2, z_2^3).
\]

Throughout the paper, the word “generic” is used for a property
if it holds for all points outside a set of Lebesgue measure zero in the space of input data.
For a given multi-degree $(d_1,\ldots,d_N)$ (resp., a degree $d$)
in the variable $\allx=(x_1,\ldots,x_N)$ (resp., in variable $x_i$),
we say a polynomial $p(\allx)$ (resp., $q(x_i)$)
is generic if the coefficient vector of $p$ (resp., $q$)
is generic in the space of coefficients.
For multi-degrees $a_1,\ldots,a_N$ and degrees $b_{1,1},b_{1,2},\ldots,b_{1,m_1},b_{2,1},\ldots,b_{N,m_N}$,
we say the NEPP is generic if for each $i$ and $j$,
the $f_i(x_1\ddd x_N)$ is a generic polynomial with multi-degree $a_i$,
and the $g_{i,j}(x_i)$ is a generic polynomial whose degree is $b_{i,j}$.

\subsection{Ideals and positive polynomials}
\label{sc:idqm}
Let $\mathbb{F}=\re\ \mbox{or}\ \cpx$. For a polynomial $p \in\mathbb{F}[z]$
and subsets $I, J \subseteq \mathbb{F}[z]$, define the product and Minkowski sum
\[
p \cdot I   \coloneqq \{ p  q: \, q \in I \}, \quad
I+J   \coloneqq  \{a+b: \, a \in I, b \in J  \}.
\]
The subset $I$ is an ideal if $p \cdot I \subseteq I$ for all $p\in\mathbb{F}[z]$
and $I+I \subseteq I$.
For a tuple of polynomials $q = (q_1, \ldots, q_m)$, the set
\[
\idl[q] \coloneqq  q_1\cdot\mathbb{F}[z] + \ldots + q_m \cdot \mathbb{F}[z]
\]
is the ideal generated by $q$, which is the smallest ideal containing each $q_i$.

We review basic concepts in polynomial optimization.
A polynomial $\sigma \in \re[z]$ is said to be a sum of squares (SOS) if $\sigma=s_1^2+s_2^2+\ldots+s_t^2$ for some polynomials $s_1,\ldots,s_t\in\nfR[z]$.
The set of all SOS polynomials in $z$ is denoted as $\Sigma[z]$.
For a degree $k$, we denote the truncation
\[
\Sigma[z]_{2k} \, \coloneqq \, \Sigma[z] \cap \nfR[z]_{2k}.
\]
For a tuple $g=(g_1,\ldots,g_t)$ of polynomials in $z$,
its quadratic module is the set
\[
\qmod[g] \,  \coloneqq  \,  \Sigma[z] +  g_1 \cdot \Sigma[z] + \ldots + g_t \cdot  \Sigma[z].
\]
Similarly, we denote the truncation of $\qmod(g)$
\[
\qmod[g]_{2k} \,  \coloneqq  \, \Sigma[z]_{2k} + g_1\cdot \Sigma[z]_{2k-\deg(g_1)}
+\cdots+g_t\cdot\Sigma[z]_{2k-\deg(g_t)}.
\]
The tuple $g$ determines the basic closed semi-algebraic set
\begin{equation}
  \label{polyrep}
\mathcal{S}(g) \,  \coloneqq  \,  \{z \in \nfR^l: g(z) \ge  0  \}.
\end{equation}
For a tuple $h=(h_1,\ldots,h_s)$ of polynomials in $\re[z]$, its real zero set is
\[
\mcal{Z}(h)  \coloneqq  \{u \in\re^l: h_1(u)=\cdots=h_s(u)=0\}.
\]
The set $\idl[h]+\qmod[g]$ is said to be {\it archimedean}
if there exists $\rho \in \idl[h]+\qmod[g]$ such that the set $\mathcal{S}(\rho)$ is compact.
If $\idl[h]+\qmod[g]$ is archimedean, then $\mathcal{Z}(h)\cap\mathcal{S}(g)$ must be compact.
Conversely, if $\mathcal{Z}(h)\cap\mathcal{S}(g)$ is compact, say,
$\mathcal{Z}(h)\cap\mathcal{S}(g)$ is contained in the ball $R -\|z\|^2 \ge 0$, then $\idl(h)+\qmod(g,R -\|z\|^2)$ is archimedean and $\mathcal{Z}(h)\cap\mathcal{S}(g) = \mathcal{Z}(h)\cap\mathcal{S}(g, R -\|z\|^2)$.
Clearly, if $f \in \idl[h]+\qmod[g]$, then $f \ge 0$
on $\mathcal{Z}(h) \cap \mathcal{S}(g)$.
The reverse is not necessarily true.
However, when $\idl[h]+\qmod[g]$ is archimedean, if $f > 0$
on $\mathcal{Z}(h)\cap\mathcal{S}(g)$, then $f \in \idl[h]+\qmod[g]$.
This conclusion is referenced as Putinar's Positivestellensatz \cite{putinar1993positive}.
Interestingly, if $f \ge 0$ on $\mathcal{Z}(h)\cap\mathcal{S}(g)$, we also have $f\in \idl[h]+\qmod[g]$, under some standard optimality conditions \cite{nie2014optimality}.

\subsection{Localizing and moment matrices}

Let $\re^{ \N_{2k}^{l} }$ denote the space of all real vectors that are labeled by
$\af \in \N_{2k}^{l}$.
Each $y \in \re^{ \N_{2k}^{l} }$ is labeled as
\[
y \, = \, (y_\af)_{ \af \in \N_{2k}^{l} }.
\]
Such $y$ is called a {\it truncated multi-sequence} (tms) of degree $2k$.
For a polynomial $f  = \sum_{ \af \in \N^l_{2k} } f_\af z^\af \in  \re[z]_{2k}$,
define the operation
\be \label{<f,y>}
\langle f, y \rangle \,= \, \sum_{ \af \in \N^l_{2k} } f_\af y_\af.
\ee
The operation $\langle f, y \rangle$ is a bilinear function in $(f, y)$.
For a polynomial $q \in \re[z]$ with $\deg(q) \le 2k$ and the integer
\[
t \,= \, k - \lceil \deg(q)/2 \rceil ,
\]
the outer product $q \cdot [z]_t ([z]_t)^T$ is a symmetric matrix polynomial in $z$,
with length $\binom{n+t}{t}$.
We write the expansion as
\[
q \cdot [z]_t ([z]_t)^T \, = \, \sum_{ \af \in \N_{2k}^l }
z^\af  Q_\af,
\]
for some symmetric matrices $Q_\af$.
Then we define the matrix function
\be \label{df:Lf[y]}
L_{q}^{(k)}[y] \,  \coloneqq  \, \sum_{ \af \in \N_{2k}^l } y_\af  Q_\af.
\ee
It is called the $k$th {\it localizing matrix} of $q$ and generated by $y$.
For given $q$, $L_{q}^{(d)}[y]$ is linear in $y$.
Clearly, if $q(u) \geq 0$ and $y = [u]_{2k}$, then
\[ L_{q}^{(k)}[y] = q(u) [u]_t[u]_t^T \succeq 0. \]
For instance, if $l=k=2$
and $q(z)= 1 - z_1-z_1z_2$, then
\[
L_q^{(2)}[y]=\left [\begin{matrix}
y_{00}-y_{10}-y_{11} &  y_{10}-y_{20}-y_{21} &  y_{01}-y_{11}-y_{12} \\
y_{10}-y_{20}-y_{21} &  y_{20}-y_{30}-y_{31} &  y_{11}-y_{21}-y_{22} \\
y_{01}-y_{11}-y_{12} &  y_{11}-y_{21}-y_{22} &  y_{02}-y_{12}-y_{13} \\
\end{matrix}\right ].
\]
When $q$ is the constant one polynomial,
the localizing matrix $L_{1}^{(k)}[y]$
reduces to a moment matrix, which we denote as
\be\label{eq:Mdy}
M_k[y] \,  \coloneqq  \, L_{1}^{(k)}[y].
\ee
For instance, for $n=2$ and $y \in \re^{ \N^2_4 }$, we have
$M_0[y] = [y_{00}]$,
\[
M_1[y]= \left[
\begin{array}{cccccc}
y_{00} & y_{10} & y_{01}  \\
y_{10} & y_{20} & y_{11}  \\
y_{01} & y_{11} & y_{02} \\
\end{array} \right],
\quad
M_2[y]= \left[
\begin{array}{cccccc}
y_{00} & y_{10} & y_{01} & y_{20} & y_{11} & y_{02} \\
y_{10} & y_{20} & y_{11} & y_{30} & y_{21} & y_{12} \\
y_{01} & y_{11} & y_{02} & y_{21} & y_{12} & y_{03} \\
y_{20} & y_{30} & y_{21} & y_{40} & y_{31} & y_{22} \\
y_{11} & y_{21} & y_{12} & y_{31} & y_{22} & y_{13} \\
y_{02} & y_{12} & y_{03} & y_{22} & y_{13} & y_{04} \\
\end{array} \right].
\]
Localizing and moment matrices are basic tools to
formulate semidefinite relaxations for polynomial optimization problems.
They are important tools for solving
polynomial, matrix, and tensor optimization problems
\cite{HilNie08,PMI2011,Nie2012,Nuc2017,NieZhang18}.

\subsection{Optimality conditions for NEPs}

Consider the $i$th player's individual optimization problem
$\mbf{F}_i(x_{-i})$ in (\ref{eq:subprob}), for given $\xmi$.
Suppose $\mcal{E}_i\cup\mcal{I}_i=[m_i]$ for some $m_i\in\N$.
For convenience, we write the constraining functions as
\[
g_i(x_i) \,  \coloneqq  \, (g_{i,1}(x_i), \ldots, g_{i,m_i}(x_i)).
\]
Suppose $x= (x_1, \ldots, x_N)$ is an NE.
Under linear independence constraint qualification condition (LICQC) at $x_i$,
i.e., the set of gradients for active constraining functions are linearly independent,
there exist Lagrange multipliers $\lambda_{i,j}$ such that
\be
\label{eq:KKTwithLM}
\left\{
\begin{array}{c}
\sum_{j=1}^{m_i}\lambda_{i,j}\nabla_{x_i} g_{i,j}(\xpi) = \nabla_{x_i} f_i(\allx), \\
 0\leq  \lambda_{i,j}\perp g_{i,j}(\xpi) \ge 0  \ (j \in \mcal{I}_i ).
\end{array}
\right.
\ee
In the above, $\lambda_{i,j}\perp g_{i,j}(\xpi)$ means that
$\lambda_{i,j}\cdot g_{i,j}(\xpi)=0$.
This is called the KKT condition for the optimization $\mbf{F}_i(x_{-i})$.
We say a point $x\in\re^n$ is a {\it KKT point} if there exist vectors
of Lagrange multipliers $\lmd_1\ddd \lmd_N$ such that (\ref{eq:KKTwithLM}) holds.
For the NE $x$, if the LICQC of $\mbf{F}_i(x_{-i})$ holds at $\xpi$
for every $i\in[N]$, then $x$ must be a KKT point.
Moreover, if each player's optimization problem is convex, i.e.,
the $f_i(\xpi,\xmi)$ is convex in $\xpi$ for all
$\xmi\in X_1\times \dots\times X_{i-1}\times X_{i+1}\times\dots\times X_N$,
and every $X_i$ is a convex set,
then all KKT points are NEs \cite[Theorem 4.6]{FacKan10OR}.

\begin{example} \rm
Consider the $2$-player NEP in Example~\ref{ep:isolatedep}.
Each individual optimization is strictly convex, because Hessian matrices
$\nabla^2_{x_1}f_1$ and $\nabla^2_{x_2}f_2$ are positive definite.
The constraints are the convex ball conditions. The KKT system is
\be
\label{eq:1.1}
\left\{
\begin{array}{l}
 2x_{1,1}+x_{2,1}+4x_{2,2}=-2\lambda_1 x_{1,1},4x_{1,2}=-2\lambda_1 x_{1,2},\\
 x_{1,1}+2x_{1,2}+2x_{2,1}=-2\lambda_2 x_{2,1},2x_{1,1}+x_{1,2}+2x_{2,2}=-2\lambda_2 x_{2,2},\\
 \lambda_1(1-(x_{1,1})^2-(x_{1,2})^2)=0,\lambda_2(1-(x_{2,1})^2-(x_{2,2})^2)=0,\\
 1-(x_{1,1})^2-(x_{1,2})^2\ge0,1-(x_{2,1})^2-(x_{2,2})^2\ge0,\\
 \lambda_1\ge0, \lambda_2\ge 0.
\end{array}\right.
\ee
By solving the above directly, one can show that
this NEP has only $3$ KKT points, together with Lagrange multipliers as follows
\[
\boxed{
\begin{array}{ll|l}
\multicolumn{2}{c|}{\mbox{Nash equilibrium}} & \multicolumn{1}{c}{\mbox{Lagrange multiplier}}\\ \hline
x_1^* = (0, 0), & x_2^* = (0, 0), &\quad \lambda_1^*=\lambda_2^*=0; \\\hline
x_1^* = (1, 0), & x_2^* = \frac{1}{\sqrt{5}} (-1, -2 ), &\quad \lambda_1^*=\frac{9\sqrt{5}}{10}-1, \,\, \lambda_2^*=\frac{\sqrt{5}}{2}-1;\\\hline
x_1^* = (-1, 0), & x_2^* = \frac{1}{\sqrt{5}}(1, 2 ), &
  \quad \lambda_1^*=\frac{9\sqrt{5}}{10}-1,\,\, \lambda_2^*=\frac{\sqrt{5}}{2}-1.
\end{array}}
\]
All these KKT points are NEs since the NEP is convex.
Furthermore, since for each $i=1,2$, the LICQC of $\mathbf{F}_i(\xmi)$ holds for all $x\in X$,
these NEs are all solutions to the NEP.
This is very different from a single convex optimization problem,
where the set of minimizers, if it is nonempty,
must be a singleton or have an infinite cardinality if the objective function is convex,
and the minimizer has to be unique if the objective function
is further assumed to be strictly convex.
\end{example}

However, the KKT point may not be an NE of the NEP when there is no convexity assumed.
This is because the KKT condition (\ref{eq:KKTwithLM}) is typically not sufficient
for $\xpi$ to be a minimizer of $\mbf{F}_i(x_{-i})$,
which makes nonconvex NEPs quite difficult to solve.
In this paper, we mainly focus on finding NEs for nonconvex NEPs of polynomials.

\section{Polynomial optimization formulations}
\label{sc:finding}
In this section, we show how to formulate efficient polynomial optimization problems
for solving the NEPP \reff{eq:NEP}.
We first introduce the polynomial expressions for Lagrange multiplier expressions in Section~\ref{sc:optlme}.
Then, in Section~\ref{sc:alg},
polynomial optimization problems are formulated for finding NEs,
and an algorithm to solve nonconvex NEPs is proposed.
Convex NEPs of polynomials are studied in Section~\ref{ssc:convex}.
Last, we further extend our approach to find more NEs in Section~\ref{sc: all}.

\subsection{Optimality conditions and Lagrange multiplier expressions}
\label{sc:optlme}

For the NEP (\ref{eq:NEP}), if $x$ is an NE where the LICQC is satisfied,
then it must be a KKT point, i.e., $x$ satisfies (\ref{eq:KKTwithLM}) for all $i\in[N]$.
Therefore, every NE must satisfy the following equation system:
\be
\underbrace{\bbm
\nabla_{x_i} g_{i,1}(\xpi) & \nabla_{x_i} g_{i,2}(\xpi)  &  \cdots &  \nabla_{x_i} g_{i,m_i}(\xpi)  \\
g_{i,1}(\xpi) & 0  & \cdots & 0 \\
0  & g_{i,2}(\xpi)  & \cdots & 0 \\
\vdots & \vdots & \ddots & \vdots \\
0  &  0  & \cdots & g_{i,m_i}(\xpi)
\ebm}_{G_i(x_i) }
\underbrace{
\bbm  \lmd_{i,1} \\ \lmd_{i,2} \\ \vdots \\ \lmd_{i,m_i} \ebm
}_{\lmd_i}
=
\underbrace{\bbm  \nabla_{x_i} f_i(x)  \\ 0 \\ \vdots \\ 0 \ebm}_{ \hat{f}_i(\allx)} .
\ee
If there exists a matrix polynomial $H_i(x_i)$ such that
\be \label{HiGi=Isi}
H_i(x_i) G_i(x_i)  \, = \, I_{m_i},
\ee
then we can express $\lmd_i$ as
\[
\lmd_i = H_i(x_i) G_i(x_i) \lmd_i = H_i(x_i) \hat{f}_i(\allx).
\]
Interestingly, the matrix polynomial $H_i(x_i)$ satisfying \reff{HiGi=Isi}
exists under the nonsingularity condition on $g_i$.
The polynomial tuple $g_i$ is said to be \textit{nonsingular} if
$G_i(x_i)$ has full column rank for all $x_i \in \cpx^{n_i}$ \cite{Nie2018}.
It is a generic condition \cite[Proposition~2.1]{nie2009}.
We remark that if $g_i$ is nonsingular,
then the LICQC holds at every minimizer of \reff{eq:subprob},
so there must exist $\lambda_{i,j}$ satisfying \reff{eq:KKTwithLM}
and we can express $\lmd_{i,j}$ as
\be \label{polyexpr:lmdij}
\lmd_{i,j}  =  \lmd_{i,j}(\allx)  \coloneqq
\big( H_i(x_i) \hat{f}_i(\allx) \big)_j
\ee
for all NEs.
For example, we consider the following two cases:
\bit

\item For the constraint $\{x_i\in\re^{n_i}:\sum_{j=1}^{n_i}x_{i,j}\le 1,x_i\ge0\}$,
the constraining polynomials are
\[
g_{i,0}=1-\sum_{j=1}^{n_i}x_{i,j}, \, g_{i,1}=x_{i,1},\, \ldots,\, g_{i,n_i}=x_{i,n_i}.
\]
If we let
\[H_i(x_i)=\left[\begin{array}{ccccccc}
1-x_{i,1} & -x_{i,2} & \ldots & -x_{i,n_i} & 1 & \ldots & 1\\
-x_{i,1} & 1-x_{i,2} & \ldots & -x_{i,n_i} & 1 & \ldots & 1\\
\vdots  & \vdots  & \ddots  & \vdots  &\vdots   &\vdots &\vdots\\
-x_{i,1} & -x_{i,2} & \ldots & 1-x_{i,n_i} & 1 & \ldots & 1\\
-x_{i,1} &  -x_{i,2} & \ldots & -x_{i,n_i} & 1 & \ldots & 1
\end{array}\right],\]
then one may check that the (\ref{HiGi=Isi}) holds.
The Lagrange multipliers $\lmd_{i,j}$ can be accordingly represented as
\be\label{eq:simplme}
\lmd_{i,0} \,=\, x_i^T\nabla_{x_i} f_i, \quad
\lambda_{i,j} \, = \, \frac{\partial f_i}{\partial x_{i,j}}-x_i^T\nabla_{x_i} f_i ,
\quad j = 1,\ldots, n_i.
\ee

\item For the sphere constraint $1-x_i^Tx_i = 0$
or the ball constraint $1-x_i^Tx_i \ge 0 $,
the constraining polynomial is $g_{i,1}=1-x_i^Tx_i$.
If we let
\[H_i(x_i)=\left[\begin{array}{ccccccc}
-\frac{1}{2}x_{i,1} & -\frac{1}{2}x_{i,2} & \ldots & -\frac{1}{2}x_{i,n_i} & 1
\end{array}\right],\]
then one may check that the (\ref{HiGi=Isi}) holds.
The Lagrange multiplier can be accordingly expressed as
\be\label{eq:balllme}\lambda_{i,1} \, = \, -\frac{1}{2}x_i^T\nabla_{x_i} f_i.\ee

\eit
For general nonsingular constraining tuple,
one may find $H_i(x_i)$ satisfying (\ref{HiGi=Isi}) by solving linear equations.
We refer to \cite{Nie2018} for more details on
getting the polynomial expressions of Lagrange multipliers.

Throughout the paper, we assume that every constraining polynomial tuple
$g_i$ is nonsingular. This is a generic assumption.
So all $\lmd_{i,j}$ can be expressed as polynomials
as in \reff{polyexpr:lmdij}. Then, each Nash equilibrium
satisfies the following polynomial system
\be
\label{eq:criticalpt}
\left\{
\begin{array}{l}
\nabla_{x_i} f_i(\allx)
-\sum_{j=1}^{m_i}\lambda_{i,j}(\allx) \nabla_{x_i} g_{i,j}(x_i) =0 \ (i\in[N]), \\
  g_{i,j}(x_i) = 0 \ (i\in[N], j \in  \mcal{E}_i), \,
\lmd_{i,j}(\allx)  g_{i,j}(x_i) =  0 \ (i\in[N], j \in  \mcal{I}_i), \\
  g_{i,j}(x_i) \geq  0 \ (j \in  \mcal{I}_i), \,
           \lambda_{i,j} (\allx) \geq 0 \ (i\in[N], j \in  \mcal{I}_i) .
\end{array}
\right.
\ee

\subsection{An algorithm for finding an NE}
\label{sc:alg}

For the NEP of polynomials (\ref{eq:NEP}),
let $\lmd_{i,j}(x)$ be polynomial Lagrange multiplier expressions
as in (\ref{polyexpr:lmdij}) for each $i\in[N]$ and $j\in[m_i]$.
Then every NE must satisfy the polynomial system \reff{eq:criticalpt}.
Choose a generic positive definite matrix
\[
\Theta \, \in \, \re^{ (n+1) \times  (n+1)   } .
\]
Then all NEs are feasible points for the following optimization problem
\be
\label{eq:KKTfeasopt}
\left\{
\begin{array}{rll}
\min\limits_{\allx} & [\allx]_1^T \cdot \Theta \cdot [\allx]_1 \\
\st  &  \nabla_{x_i} f_i(\allx)
-\sum_{j=1}^{m_i}\lambda_{i,j}(\allx) \nabla_{x_i} g_{i,j}(x_i) =0 \ (i \in [N]),  \\
  &  g_{i,j}(x_i) = 0 \ (j \in  \mcal{E}_i, i \in [N]), \\
  & \lmd_{i,j}(\allx)  g_{i,j}(x_i) =  0 \ (j \in  \mcal{I}_i, i \in [N]), \\
  &  g_{i,j}(x_i) \geq  0 \ (j \in  \mcal{I}_i, i \in [N]), \\
  & \lambda_{i,j} (\allx) \geq 0 \ (j \in  \mcal{I}_i, i \in [N]) .
\end{array}
\right.
\ee
In the above, the vector $[x]_1  \coloneqq  (1, x_1, x_2, \dots, x_n)^T \in \re^{n+1}$.
Note that $x\in\re^n$ is a KKT point for the NEP
if and only if it is feasible for (\ref{eq:KKTfeasopt}).
It is important to observe that if \reff{eq:KKTfeasopt} is infeasible,
then there are no NEs.
If \reff{eq:KKTfeasopt} is feasible,
then it must have a minimizer,
because its objective is a positive definite quadratic function.
Moreover, for a generic $\Theta\in\re^{(n+1)\times (n+1)}$,
the minimizer of \reff{eq:KKTfeasopt} is unique (see Theorem~\ref{thmcvg:upconv}).
The (\ref{eq:KKTfeasopt}) is a polynomial optimization problem,
which can be solved by the Moment-SOS semidefinite relaxations
(see Section~\ref{sc:solvepop}).

Assume that $u  \coloneqq  (u_1, \ldots, u_N)$
is an optimizer of (\ref{eq:KKTfeasopt}).
Then $u$ is an NE if and only if each $u_i$
is a minimizer of $\mathbf{F}_i(u_{-i})$.
To this end, for each player,
consider the optimization problem:
\be  \label{eq:lowerlvopt}
\left\{
\begin{array}{rcl}
\omega_i\, \coloneqq \, & \min & f_i(x_i, u_{-i})-f_i(u_i, u_{-i}) \\
        &    \st  &  g_{i,j}(\xpi) = 0 \ (j \in \mcal{E}_i),  \\
        &         &  g_{i,j}(\xpi) \ge 0 \ (j \in \mcal{I}_i) .
\end{array}
\right.
\ee
If all the optimal values $\omega_i \ge 0$, then $u$ is a Nash equilibrium.
If one of them is negative, say, $\omega_i < 0$, then $u$ is not an NE.
For such a case, let $U_i$ be a set of some optimizers of \reff{eq:lowerlvopt},
then $u$ violates the following inequalities
\be \label{cutineq:NE}
f_i(x_i, x_{-i}) \le f_i(v, x_{-i}) \quad  (v \in U_i) .
\ee
However, every Nash equilibrium must satisfy \reff{cutineq:NE}.

When $u$ is not an NE, we aim at finding a new candidate
by posing the inequalities in \reff{cutineq:NE}.
Therefore, we consider the following optimization problem:
\be  \label{eq:upperlvopt}
\left\{
  \begin{array}{rl}
    \min\limits_{\allx} &  [\allx]_1^T \cdot \Theta \cdot [\allx]_1   \\
    \st  &  \nabla_{x_i} f_i(\allx)
-\sum_{j=1}^{m_i}\lambda_{i,j}(\allx) \nabla_{x_i} g_{i,j}(x_i) =0 \ (i \in [N]),  \\
  &  g_{i,j}(x_i) = 0 \ (j \in  \mcal{E}_i, i \in [N]), \\
  & \lmd_{i,j}(\allx)  g_{i,j}(x_i) =  0 \ (j \in  \mcal{I}_i, i \in [N]), \\
  &  g_{i,j}(x_i) \geq  0 \ (j \in  \mcal{I}_i, i \in [N]), \\
  & \lambda_{i,j} (\allx) \geq 0 \ (j \in  \mcal{I}_i, i \in [N]),  \\
 & f_i(v, x_{-i}) - f_i(x_i, x_{-i}) \ge 0\ (v \in \mcal{K}_i, i\in[N]) .
  \end{array}
  \right.
\ee
In the above, each $\mcal{K}_i$ is a set of some optimizers of
\reff{eq:lowerlvopt}.
We solve \reff{eq:upperlvopt} again for a minimizer, say, $\hat{u}$.
If $\hat{u}$ is verified to be an NE, then we are done.
If it is not, we can add more inequalities like \reff{cutineq:NE}
to exclude both $u$ and $\hat{u}$.
Repeating this procedure, we get the following algorithm for computing an NE.

\begin{alg}
\label{ag:KKTSDP}
For the NEP given as in \reff{eq:subprob} and \reff{eq:NEP}, do the following

\begin{itemize}

\item [Step~0]
Initialize $\mcal{K}_i  \coloneqq  \emptyset$ for all $i$ and $\ell  \coloneqq  0$.
Choose a generic positive definite matrix $\Theta$ of length $n+1$.

\item [Step~1]
Solve the polynomial optimization problem \reff{eq:upperlvopt}.
If it is infeasible, then output that there is no NE and stop; otherwise,
solve it for an optimizer $u$.

\item [Step~2]
For each $i=1,\ldots,N$, solve the optimization \reff{eq:lowerlvopt}.
If all $\omega_i \ge 0$, then output the NE $u$ and stop.
If one of $\omega_i$ is negative, then go to the next step.

\item [Step~3]
For each $i$ with $\omega_i < 0$,
obtain a set $U_i$ of some (may not all) optimizers of \reff{eq:lowerlvopt};
then update the set $\mcal{K}_i  \coloneqq  \mcal{K}_i \cup U_i$.
Let $\ell  \coloneqq  \ell+1$, then go to Step~1.

\end{itemize}
\end{alg}

In the Step~0, we can set $\Theta = R^T R$
for a randomly generated matrix $R$ of length $n+1$.
The objective in \reff{eq:upperlvopt} is a positive definite quadratic function,
so it has a minimizer if \reff{eq:upperlvopt} is feasible.
The case is slightly different for (\ref{eq:lowerlvopt}).
If the feasible set $X_i$ is compact or $f_i(x_i, u_{-i})$ is coercive
for the given $u_{-i}$, then (\ref{eq:lowerlvopt}) has a minimizer.
If $X_i$ is unbounded and $f_i(x_i, u_{-i})$ is not coercive,
it may be difficult to compute the optimal value $\omega_i$.
In applications, we are mostly interested in cases that (\ref{eq:lowerlvopt})
has a minimizer, for the existence of an NE.
We discuss how to solve the optimization problems in Algorithm~\ref{ag:KKTSDP}
by the Moment-SOS hierarchy of semidefinite relaxations in Section~\ref{sc:solvepop}.

The following is the convergence theorem for Algorithm~\ref{ag:KKTSDP}.
\begin{theorem}  \label{tm:finiteconv}
Assume each constraining polynomial tuple $g_i$ is nonsingular
and let $\lmd_{i,j}(\allx)$ be polynomial expressions of Lagrange multipliers as in \reff{polyexpr:lmdij}.
Let $\mcal{G}$ be the feasible set of \reff{eq:KKTfeasopt} and $\mcal{G}^*$
be the set of all NEs.
If the complement $\mcal{G} \backslash \mcal{G}^*$ is a finite set, i.e.,
the cardinality $\ell^*  \coloneqq  |\mcal{G} \backslash \mcal{G}^*| < \infty$,
then Algorithm~\ref{ag:KKTSDP} must terminate within at most $\ell^*$ loops.
\end{theorem}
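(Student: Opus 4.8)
The plan is to argue that each loop of Algorithm~\ref{ag:KKTSDP} that fails to produce a NE permanently removes at least one point of $\mc{G}\backslash\mc{G}^*$ from the feasible region of the optimization \reff{eq:upperlvopt}, so after at most $\ell$ such loops the algorithm is forced either to find a NE or to declare infeasibility. First I would set up notation: for $l\ge 0$ let $\mc{K}_i^{(l)}$ denote the cut sets at the start of loop $l$, let $\mc{F}^{(l)}$ be the feasible set of \reff{eq:upperlvopt} with these cut sets, and let $u^{(l)}$ be the computed optimizer in Step~1 of that loop (assuming the algorithm has not yet stopped). Since adding the inequalities $f_i(v,x_{-i})-f_i(x_i,x_{-i})\ge 0$ only shrinks the feasible set, we have $\mc{F}^{(0)}=\mc{G}\supseteq\mc{F}^{(1)}\supseteq\mc{F}^{(2)}\supseteq\cdots$, and every NE satisfies all the cut inequalities \reff{cutineq:NE}, so $\mc{G}^*\subseteq\mc{F}^{(l)}$ for all $l$; in particular $\mc{F}^{(l)}\backslash\mc{G}^*\subseteq\mc{G}\backslash\mc{G}^*$.

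The key step is the following claim: if loop $l$ does not stop at Step~2 (i.e.\ some $\omega_i<0$ for the optimizer $u^{(l)}$), then $u^{(l)}\notin\mc{F}^{(l+1)}$. Indeed, for the index $i$ with $\omega_i<0$, Step~3 picks an optimizer $v\in U_i$ of \reff{eq:lowerlvopt}, so $f_i(v,u^{(l)}_{-i})-f_i(u^{(l)}_i,u^{(l)}_{-i})=\omega_i<0$, and this $v$ is placed in $\mc{K}_i^{(l+1)}$. Hence $u^{(l)}$ violates the corresponding inequality in \reff{eq:upperlvopt} for loop $l+1$, so $u^{(l)}\notin\mc{F}^{(l+1)}$. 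On the other hand $u^{(l)}\in\mc{F}^{(l)}$ by construction, and $u^{(l)}\notin\mc{G}^*$ (it is not a NE, precisely because some $\omega_i<0$, using that each $g_i$ is nonsingular so feasibility of \reff{eq:lowerlvopt} is not an issue for detecting non-optimality). Therefore $u^{(l)}\in\bigl(\mc{F}^{(l)}\backslash\mc{G}^*\bigr)\setminus\bigl(\mc{F}^{(l+1)}\backslash\mc{G}^*\bigr)$, which gives the strict inclusion $\mc{F}^{(l+1)}\backslash\mc{G}^* \subsetneq \mc{F}^{(l)}\backslash\mc{G}^*$.

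Finally I would run the counting argument. Each of the sets $\mc{F}^{(l)}\backslash\mc{G}^*$ is a subset of the finite set $\mc{G}\backslash\mc{G}^*$ of cardinality $\ell$, and the chain $\mc{F}^{(0)}\backslash\mc{G}^*\supsetneq\mc{F}^{(1)}\backslash\mc{G}^*\supsetneq\cdots$ strictly decreases as long as the algorithm keeps looping without stopping. A strictly decreasing chain of subsets of an $\ell$-element set can have length at most $\ell+1$, so there are at most $\ell$ loop transitions; equivalently, the algorithm cannot execute more than $\ell$ loops without reaching a stopping condition at Step~1 (infeasibility $\Rightarrow$ no NE) or at Step~2 (all $\omega_i\ge 0$ $\Rightarrow$ $u^{(l)}$ is a NE). This yields termination within at most $\ell$ loops.

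The main obstacle I anticipate is making the claim $u^{(l)}\notin\mc{G}^*$ airtight: one must be sure that $\omega_i<0$ genuinely certifies that $u^{(l)}_i$ is \emph{not} a minimizer of $\mbf{F}_i(u^{(l)}_{-i})$, which requires that \reff{eq:lowerlvopt} is solved correctly and that its feasible set is exactly $X_i$ (independent of $u^{(l)}_{-i}$, as holds for NEPs) — and, for the exactness of the numerical computation, that the Moment-SOS relaxations used in Section~\ref{sc:solvepop} are tight. For the purely combinatorial statement of the theorem, though, the only real content is the strict-inclusion claim above; the counting is routine.
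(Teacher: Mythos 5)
Your proposal is correct and follows essentially the same route as the paper's own proof: every NE stays feasible for \reff{eq:upperlvopt} under the added cuts, while each non-terminating loop removes the previous (non-NE) optimizer from the feasible set, which is contained in $\mc{G}$, so at most $\ell = |\mc{G}\backslash\mc{G}^*|$ loops can occur. Your version merely formalizes the strict-decrease counting a bit more explicitly than the paper does.
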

\begin{proof}
Under the nonsingularity assumption of polynomial tuples $g_i$,
the Lagrange multipliers $\lmd_{i,j}$ can be expressed as polynomials
$\lmd_{i,j}(\allx)$ as in \reff{polyexpr:lmdij}.
For each $u$ that is a feasible point of \reff{eq:KKTfeasopt},
every NE must satisfy the constraint
\[
f_i(u_i, x_{-i}) - f_i(x_i, x_{-i}) \ge 0.
\]
Therefore, every NE must also be a feasible point of \reff{eq:upperlvopt}.
Since the matrix $\Theta$ is positive definite,
the optimization \reff{eq:upperlvopt} must have a minimizer,
unless it is infeasible.
When Algorithm~\ref{ag:KKTSDP} goes to a newer loop,
say, from the $\ell$th to the $(\ell+1)$th,
the optimizer $u$ for \reff{eq:upperlvopt} in the $\ell$th loop
is no longer feasible for \reff{eq:upperlvopt} in the $(\ell+1)$th loop.
This means that the feasible set of \reff{eq:upperlvopt}
must loose at least one point after each loop,
unless an NE is met.
Also note that the feasible set of \reff{eq:upperlvopt}
is contained in $\mcal{G}$. If $\mcal{G} \backslash \mcal{G}^*$
is a finite set, Algorithm \ref{ag:KKTSDP}
must terminate after some loops.
The number of loops is at most $\ell^*$.
\end{proof}

As shown in the appendix, when the NEP is given by generic polynomials,
the NEP has finitely many KKT points (see Theorem~\ref{tm:genfinite}).
For such cases, $|\mcal{G} \backslash \mcal{G}^*|\le |\mcal{G}|<\infty$
and finite termination of Algorithm~\ref{ag:KKTSDP} is guaranteed.

\begin{theorem} \label{tm:genfiniteconv}
Let $d_{i,j}>0$, $a_{i,j} >0$ be degrees, for all $i \in [N], j \in  [m_i]$.
If each $g_{i,j}$ is a generic polynomial in $x_i$ of degree $d_{i,j}$
and each $f_i$ is a generic polynomial in $\allx$ whose degree in $x_j$ is $a_{i,j}$,
then Algorithm~\ref{ag:KKTSDP} terminates within finitely many loops, i.e.,
it either finds an NE if there exists any, or detect nonexistence of NEs.
\end{theorem}
\begin{proof}
The conclusion follows directly from
Theorems~\ref{tm:finiteconv} and \ref{tm:genfinite}.
\end{proof}

When there exist infinitely many KKT points that are not NEs,
Algorithm~\ref{ag:KKTSDP} can still be applied to compute an NE
if there exists one, or detect nonexistence of NEs if they do not exist.
See Example~\ref{ep:infiKKT}(ii) for such a case.
However, for such NEPPs, the convergence property of
Algorithm~\ref{ag:KKTSDP} is not fully understood.

\subsection{Convex NEPs}
\label{ssc:convex}

The NEP is said to be {\it convex} if for every $i\in[N]$,
the $f_i(\xpi,\xmi)$ is convex in $\xpi$ for all $\xmi\in X_{-i} \coloneqq \prod_{j\in[N]\setminus\{i\}}X_j$, the $g_{i,j}(\xpi)$
is linear for each $j\in\mcal{E}_i$, and is concave for every $j\in\mcal{I}_i$.
For convex NEPs, every KKT point must be an NE,
since the KKT conditions are sufficient for global optimality.

Moreover, for convex NEPPs, when every constraining tuple $g_i$ is nonsingular,
the LICQC holds for all $x\in X$, and a point is an NE
if and only if it satisfies the KKT conditions.
Note that the Lagrange multipliers can be expressed
by polynomials as in (\ref{polyexpr:lmdij})
when nonsingularity is assumed.
For such cases, the solution set for (\ref{eq:KKTwithLM})
is exactly the set of NEs.
Therefore, if we solve the polynomial optimization problem (\ref{eq:upperlvopt}) with $\mcal{K}_i=\emptyset$ for all $i\in[N]$
(i.e., the polynomial optimization (\ref{eq:KKTfeasopt})),
then every minimizer, if the feasible set is nonempty, must be an NE.
On the other hand, if (\ref{eq:upperlvopt}) is infeasible,
then we immediately know the NEs do not exist.
This shows that, for convex NEPPs, Algorithm~\ref{ag:KKTSDP}
must terminate at the initial loop.

\begin{corollary} \label{cor:cvxnep}
Assume each $g_i$ is a nonsingular tuple of polynomials.
Suppose each $g_{i,j}(x_i)$ ($j \in \mcal{E}_i$) is linear,
each $g_{i,j}(x_i)$ ($j \in \mcal{I}_i$) is concave,
and each $f_i(x_i,\xmi)$ is convex in $x_i$ for all $x_{-i}\in X_{-i}$.
Then Algorithm~\ref{ag:KKTSDP} must terminate at the first loop with $\ell = 0$,
returning an NE or reporting that there is no NE.
\end{corollary}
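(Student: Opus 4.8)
The plan is to invoke Theorem~\ref{tm:finiteconv} together with the classical equivalence between KKT points and global minimizers for convex programs. First I would observe that, under the stated hypotheses, each individual optimization $\mbf{F}_i(x_{-i})$ is a convex optimization problem: the objective $f_i(x_i,x_{-i})$ is convex in $x_i$, the equality constraints are affine, and the inequality constraints are concave, so the feasible set $X_i$ is convex. Next I would use the nonsingularity of $g_i$, which by the discussion following \reff{HiGi=Isi} guarantees that LICQC holds at every point of $X_i$; in particular it holds at every minimizer of $\mbf{F}_i(x_{-i})$, so the KKT conditions \reff{eq:KKTwithLM} are necessary for optimality. Conversely, for a convex program the KKT conditions are also \emph{sufficient} for global optimality. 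Hence for each fixed $x_{-i}$, a point $x_i \in X_i$ is a minimizer of $\mbf{F}_i(x_{-i})$ if and only if it is a KKT point.

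The second step is to translate this into a statement about the feasible set $\mc{G}$ of \reff{eq:KKTfeasopt}. Because $g_i$ is nonsingular, the Lagrange multipliers admit the polynomial expressions $\lmd_{i,j}(\allx)$ of \reff{polyexpr:lmdij}, and a tuple $\allx$ lies in $\mc{G}$ exactly when each $x_i$ satisfies the KKT system $(S_i)$ of \reff{eq:criticalpt} for the given $x_{-i}$. By the first step, this is equivalent to each $x_i$ being a minimizer of $\mbf{F}_i(x_{-i})$, i.e. to $\allx$ being a NE. Thus $\mc{G} = \mc{G}^*$, and the complement $\mc{G}\backslash\mc{G}^*$ is empty, so in particular $\ell := |\mc{G}\backslash\mc{G}^*| = 0 < \infty$.

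The final step is to feed this into Theorem~\ref{tm:finiteconv}: since $\ell = 0$, the algorithm terminates within $0$ additional loops, i.e. at the first loop with $l = 0$. Concretely, in Step~1 one either finds \reff{eq:upperlvopt} (which at $l=0$ coincides with \reff{eq:KKTfeasopt}) infeasible — in which case $\mc{G} = \emptyset$, there is no NE, and the algorithm stops correctly — or one obtains an optimizer $u \in \mc{G} = \mc{G}^*$. In the latter case each $u_i$ is already a minimizer of $\mbf{F}_i(u_{-i})$, so every $\omega_i$ in \reff{eq:lowerlvopt} equals $0 \ge 0$, and Step~2 declares $u$ a NE and stops. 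Either way the algorithm halts at $l=0$ with the correct output.

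I do not anticipate a serious obstacle here; the content is essentially bookkeeping once the convex-programming KKT equivalence is in hand. The one point that deserves care is making sure the constraint qualification is genuinely available at \emph{all} relevant points (not merely generically), and this is exactly what the standing nonsingularity assumption on $g_i$ provides, via the identity \reff{HiGi=Isi}; I would state this explicitly rather than appeal to Slater's condition, since concavity of the $g_{i,j}$ alone need not yield a strictly feasible point.
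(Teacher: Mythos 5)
Your proposal is correct and follows essentially the same route as the paper: the paper's justification is precisely the observation (stated in the paragraph preceding the corollary) that for convex individual optimizations with nonsingular $g_i$, a point is a minimizer if and only if it is a KKT point, so every feasible point of \reff{eq:KKTfeasopt} is a NE and the algorithm stops at $l=0$. Your additional care about which direction of the KKT equivalence needs the constraint qualification (necessity via LICQC for $\mc{G}^*\subseteq\mc{G}$, sufficiency via convexity for $\mc{G}\subseteq\mc{G}^*$) is a correct and slightly more explicit version of the same argument.
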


\begin{example}\rm\label{ep:cvxcomp}
Consider the convex NEP in Example~\ref{ep:isolatedep}.
In this NEP,
both players have ball constraints,
so their Lagrange multipliers can be expressed by polynomials as in (\ref{eq:balllme}).
We ran Algorithm~\ref{ag:KKTSDP}
for solving this NEP\footnote{See Section~\ref{sc:solvepop}
for how to solve polynomial optimization problems, and Section~\ref{sc:num}
for computational information.},
and found the NE $x^*=(x_1^*, x_2^*)$ with
\[x_1^*=(-1.0000, 0.0000),\quad x_2^*=(0.4472, 0.8944)\]
in the initial loop.
It took around 0.88 second.
\end{example}

\subsection{More Nash equilibria}
\label{sc: all}

Algorithm~\ref{ag:KKTSDP} aims at finding a single NE.
In some applications, people may be interested in more NEs.
Moreover, when there is a unique NE, people are also interested
in a certificate for uniqueness.

In this subsection, we study how to find more NEs or
check the completeness of solution sets.
Assume that $x^*$ is a Nash equilibrium produced by Algorithm~\ref{ag:KKTSDP},
i.e., $x^*$ is also a minimizer of \reff{eq:upperlvopt}.
Then all KKT points $x$ satisfying $[x]_1^T \Theta [x]_1 \, < \,
[x^*]_1^T \Theta [x^*]_1$
are excluded from the feasible set of \reff{eq:upperlvopt} by the constraints
\[
f_i(u_i,\xmi)-f_i(x_i,\xmi) \,\ge \, 0  \quad
(\forall \, u \in \mcal{K}_i,\, \forall \, i\in[N]).
\]
If $x^*$ is an isolated NE (e.g., this is the case if there are finitely many NEs),
there exists a scalar $\dt >0$ such that
\be\label{eq:sep}
[x]_1^T \Theta [x]_1 \, \ge \,
[x^*]_1^T \Theta [x^*]_1 + \dt
\ee
for all other NEs $\allx$.
For such a $\dt$, we can try to find a different NE by
solving the following optimization problem
\be
\label{eq:upperlvoptall}
\left\{ \begin{array}{cl}
\min\limits_{\allx} & [\allx]_1^T \Theta [\allx]_1 \\
\st  &  \nabla_{x_i} f_i(\allx)
-\sum_{j=1}^{m_i}\lambda_{i,j}(\allx) \nabla_{x_i} g_{i,j}(x_i) =0 \ (i \in [N]),  \\
  &  g_{i,j}(x_i) = 0 \ (j \in  \mcal{E}_i, i \in [N]), \\
  & \lmd_{i,j}(\allx)  g_{i,j}(x_i) =  0 \ (j \in  \mcal{I}_i, i \in [N]), \\
  &  g_{i,j}(x_i) \geq  0 \ (j \in  \mcal{I}_i, i \in [N]), \\
  & \lambda_{i,j} (\allx) \geq 0 \ (j \in  \mcal{I}_i, i \in [N]),  \\
  & f_i(v, x_{-i}) - f_i(x_i, x_{-i}) \ge 0\ (v \in \mcal{K}_i, i\in[N]),  \\
  & [\allx]_1^T \Theta [\allx]_1 \, \ge \, [\allx^*]_1^T \Theta [\allx^*]_1 + \dt .
\end{array} \right.
\ee
When an optimizer of (\ref{eq:upperlvoptall}) is computed,
we can check if it is an NE or not by solving (\ref{eq:lowerlvopt}) for all $i\in[N]$.
If it is, we get a new NE that is different from $\allx^*$.
If it is not, we update the set $\mcal{K}_i$ as in Step~3 of Algorithm~\ref{ag:KKTSDP}.
Repeating the above process, we are able to get more Nash equilibria.

A concern in computation is how to choose the constant
$\dt >0$ for \reff{eq:upperlvoptall}.
We want a value $\dt >0$ such that (\ref{eq:sep}) holds for all unknown NEs.
To this end, we consider the following maximization problem
\be \label{eq:maxxi}
\left\{
\begin{array}{cl}
\max\limits_{x} & [x]_1^T \Theta [x]_1   \\
\st  &  \nabla_{x_i} f_i(x)
-\sum_{j=1}^{m_i}\lambda_{i,j}(\allx) \nabla_{x_i} g_{i,j}(x_i) =0 \ (i \in [N]),  \\
  &  g_{i,j}(x_i) = 0 \ (j \in  \mcal{E}_i, i \in [N]), \\
  & \lmd_{i,j}(x)  g_{i,j}(x_i) =  0 \ (j \in  \mcal{I}_i, i \in [N]), \\
  &  g_{i,j}(x_i) \geq  0 \ (j \in  \mcal{I}_i, i \in [N]), \\
  & \lambda_{i,j} (x) \geq 0 \ (j \in  \mcal{I}_i, i \in [N]),  \\
  & f_i(v, x_{-i}) - f_i(x_i, x_{-i}) \ge 0\ (v \in \mcal{K}_i, i\in[N]),  \\
  & [\allx]_1^T \Theta [\allx]_1 \, \le \, [\allx^*]_1^T \Theta [\allx^*]_1 + \dt .\\
\end{array} \right.
\ee
Interestingly, if $x^*$ is also a maximizer of \reff{eq:maxxi},
i.e., the maximum of \reff{eq:maxxi} equals $[x^*]^T\Theta[x^*]_1$,
then the feasible set of \reff{eq:upperlvoptall}
contains all NEs except $\allx^*$,
under some general assumptions.

\begin{prop} \label{prop:choice:dt}
Assume $\Theta$ is a generic positive definite matrix,
and $\allx^*$ is a minimizer of \reff{eq:upperlvopt}.
\bit

\item [(i)]
If $\allx^*$ is also a maximizer of \reff{eq:maxxi},
then there is no other Nash equilibrium $u$
satisfying $[u]_1^T \Theta [u]_1 \, \le \, [\allx^*]_1^T \Theta [\allx^*]_1 + \dt $.

\item [(ii)]
If $\allx^*$ is an isolated KKT point, then there exists $\dt > 0$
such that $\allx^*$ is also a maximizer of \reff{eq:maxxi}.

\eit
\end{prop}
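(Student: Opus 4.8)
The plan is to reduce both parts to three observations and then run two short arguments. \emph{First}, I would recall from the proof of Theorem~\ref{tm:finiteconv} that every Nash equilibrium is feasible for \reff{eq:upperlvopt}: a NE satisfies the KKT system $(S_i)$, and since each $\mc{K}_i \subseteq X_i$, a NE also satisfies the cut inequalities $f_i(v,x_{-i}) - f_i(x_i,x_{-i}) \ge 0$ for all $v \in \mc{K}_i$. \emph{Second}, I would note that the constraints of \reff{eq:maxxi} are exactly those of \reff{eq:upperlvopt} together with the single extra inequality $[\allx]_1^T\Theta[\allx]_1 \le [\allx^*]_1^T\Theta[\allx^*]_1 + \dt$; hence, writing $\hat{\mc F}$ for the feasible set of \reff{eq:upperlvopt} and $c^* := [\allx^*]_1^T\Theta[\allx^*]_1$, the feasible set of \reff{eq:maxxi} equals $\hat{\mc F}\cap\{[\allx]_1^T\Theta[\allx]_1 \le c^*+\dt\}$. \emph{Third}, for generic positive definite $\Theta$ the minimizer of \reff{eq:upperlvopt} is unique (same reasoning as quoted for \reff{eq:KKTfeasopt}), so $[v]_1^T\Theta[v]_1 > c^*$ for every $v\in\hat{\mc F}$ with $v\ne\allx^*$.

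For part (i), I would argue by contradiction. Suppose $u$ is a Nash equilibrium with $u\ne\allx^*$ and $[u]_1^T\Theta[u]_1 \le c^*+\dt$. By the first and second observations $u$ is feasible for \reff{eq:maxxi}, so maximality of $\allx^*$ gives $[u]_1^T\Theta[u]_1\le c^*$; since $u\in\hat{\mc F}$ and $\allx^*$ minimizes \reff{eq:upperlvopt}, we also get $[u]_1^T\Theta[u]_1\ge c^*$. Thus $u$ is a minimizer of \reff{eq:upperlvopt} attaining the same value, and uniqueness forces $u=\allx^*$, a contradiction.

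For part (ii), since $\allx^*$ is an isolated KKT point and $\hat{\mc F}$ is contained in the KKT feasible set, $\allx^*$ is isolated in $\hat{\mc F}$; fix $r>0$ with $\hat{\mc F}\cap\{v:\|v-\allx^*\|<r\}=\{\allx^*\}$. I would then consider $T := \hat{\mc F}\cap\{v:\|v-\allx^*\|\ge r\}\cap\{v:[v]_1^T\Theta[v]_1\le c^*+1\}$, which is compact because $\Theta\succ 0$ makes the last constraint bound the set. If $T=\emptyset$, any $\dt\in(0,1]$ works. Otherwise $[\cdot]_1^T\Theta[\cdot]_1$ attains a minimum $\mu$ on $T$, and since $T\subseteq\hat{\mc F}\setminus\{\allx^*\}$ the third observation gives $\mu>c^*$; choosing $\dt$ with $0<\dt<\min\{1,\mu-c^*\}$ makes the feasible set of \reff{eq:maxxi} equal to $\{\allx^*\}$ (any other feasible point would lie in $T$, forcing value $\ge\mu>c^*+\dt$), so $\allx^*$ is trivially its maximizer.

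The step I expect to be the main obstacle is justifying the third observation — uniqueness of the minimizer of \reff{eq:upperlvopt} for generic $\Theta$ — because the cut sets $\mc{K}_i$ are themselves built from optimizers of \reff{eq:lowerlvopt} and therefore depend on $\Theta$. I would handle this by first establishing, for an arbitrary but fixed semialgebraic feasible set, that the set of positive definite $\Theta$ for which $[\allx]_1^T\Theta[\allx]_1$ has two minimizers has Lebesgue measure zero, and then arguing that only a measure-zero family of cut configurations is relevant along the algorithm. Once this is granted, the remaining content of both parts is elementary: a sign-chasing argument for (i) and a compactness/separation argument for (ii).
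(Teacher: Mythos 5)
Your proof is correct and follows essentially the same route as the paper: part (i) is the identical sandwich-and-uniqueness argument, and part (ii) uses the same ingredients (isolation of $\allx^*$, compactness of the remaining feasible set forced by $\Theta \succ 0$, and uniqueness of the minimizer for generic $\Theta$ to get a positive gap), merely phrased as a direct minimum over a compact set in $\allx$-space rather than the paper's supporting-hyperplane argument in the lifted space $y=[\allx]_2$. Your closing caveat about the $\Theta$-dependence of the cut sets $\mc{K}_i$ is a legitimate subtlety, but the paper itself simply asserts uniqueness of the minimizer for generic $\Theta$ without addressing it, so it is not a gap relative to the paper's own standard of rigor.
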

\begin{proof}
Note that every NE is a feasible point of \reff{eq:upperlvopt}.

(i) If $\allx^*$ is also a maximizer of \reff{eq:maxxi},
then the objective $[\allx]_1^T \Theta [\allx]_1$
achieves a constant value in the following set of \reff{eq:maxxi}.
If $u$ is a Nash equilibrium with
$[u]_1^T \Theta [u]_1 \, \le \, [\allx^*]_1^T \Theta [\allx^*]_1 + \dt$,
then
\[
\, [u]_1^T \Theta [u]_1 = [\allx^*]_1^T \Theta [\allx^*]_1 .
\]
This means that $u$ is also a minimizer of \reff{eq:upperlvopt}.
When $\Theta$ is a generic positive definite matrix,
the optimization \reff{eq:upperlvopt} has a unique optimizer,
so $u=\allx^*$.

(ii) Since $\Theta$ is positive definite, there exists $\eps > 0$ such that
\[
 [\allx]_1^T \Theta [\allx]_1  \, \ge \,  \eps( 1 + \| x \|)^2
\]
for all $\allx$. Let $C = \sqrt{ \big( [\allx^*]_1^T \Theta [\allx^*]_1 \big) / \eps }$,
then the following set
\be \nn
T \,  \coloneqq  \,
\left\{ y = [x]_2
\left| \begin{array}{c}
  \nabla_{x_i} f_i(\allx)
-\sum_{j=1}^{m_i}\lambda_{i,j}(\allx) \nabla_{x_i} g_{i,j}(x_i) =0 \ (i \in [N]),  \\
    g_{i,j}(x_i) = 0 \ (j \in  \mcal{E}_i, i \in [N]), \\
   \lmd_{i,j}(\allx)  g_{i,j}(x_i) =  0 \ (j \in  \mcal{I}_i, i \in [N]), \\
     g_{i,j}(x_i) \geq  0 \ (j \in  \mcal{I}_i, i \in [N]), \\
   \lambda_{i,j} (\allx) \geq 0 \ (j \in  \mcal{I}_i, i \in [N]),  \\
   f_i(v, x_{-i}) - f_i(x_i, x_{-i}) \ge 0\ (v \in \mcal{K}_i, i\in[N]),  \\
   \, \| x \| \leq C
\end{array} \right. \right\}
\ee
is compact. Note that $[x^*]_2 \in T$.
Let $\theta$ be the vector such that
\[
 [x]_1^T \Theta [x]_1 \,=\, \theta^T y
\]
for all $y = [x]_2$. Since $\allx^*$ is an isolated KKT point,
the $y^*  \coloneqq  [x^*]_2$ is also an isolated point of $T$.
Then its subset
\[
T_1 \,  \coloneqq  \, T \backslash \{ y^* \}
\]
is also a compact set.
Since $\allx^*$ is a minimizer of \reff{eq:upperlvopt},
the hyperplane $H  \coloneqq \{ \theta^T y = \theta^T y^* \}$
is a supporting hyperplane for the set $T$.
Since $\Theta$ is generic, the optimization~\reff{eq:upperlvopt}
has a unique minimizer, which implies that $y^*$
is the unique minimizer of the linear function $\theta^Ty$ on $T$.
So, $H$ does not intersect $T_1$, and their distance is positive.
There exists a scalar $\tau > 0$ such that
\[
 [\allx]_1^T \Theta [\allx]_1 \,=\, \theta^T y \, \ge \, \theta^T y^* + \tau
  \,=\, [\allx^*]_1^T \Theta [\allx^*]_1 + \tau
\]
for all $y = [x]_2 \in T_1$.
Then, for the choice $\dt  \coloneqq  \tau/2$,
the point $x^*$ is the only feasible point for \reff{eq:maxxi}.
Hence, $\allx^*$ is also a maximizer of \reff{eq:maxxi}.
\end{proof}

Proposition~\ref{prop:choice:dt} shows the existence of $\dt>0$
such that \reff{eq:upperlvopt} and \reff{eq:maxxi}
have the same optimal value.
However, it does not give a concrete lower bound for $\dt$.
In computational practice, we can first give a priori value for $\dt$.
If it does not work, we can decrease $\delta$ to a smaller value
(e.g., let $\delta \coloneqq  \delta/5$).
By repeating this, the optimization (\ref{eq:maxxi})
will eventually have $x^*$ as a maximizer.
The following is the algorithm for finding an NE that is different from $x^*$.

\begin{alg}
\label{ag:allKKTSDP}
For the given NEP (\ref{eq:NEP}) and a computed NE $x^*$,
let $\Theta$ be the positive definite matrix for computing $x^*$.
\begin{itemize}
\item [Step~0] Give an initial value for $\dt$ (say, $0.1$).
\item [Step~1] Solve the maximization problem \reff{eq:maxxi}.
If its optimal value $\eta$ equals
$\upsilon  \coloneqq  [\allx^*]_1^T \Theta [\allx^*]_1$,
then go to Step~2.
If $\eta$ is bigger than $\upsilon$,
then let $\delta \coloneqq \delta/5$
and repeat this step.

\item [Step~2]
Solve the optimization problem \reff{eq:upperlvoptall}.
If it is infeasible, then output there are no additional NEs and stop;
otherwise, solve \reff{eq:upperlvoptall} for a minimizer $u$.

\item [Step~3]
For each $i=1,\ldots,N$, solve the optimization \reff{eq:lowerlvopt}
for the optimal value $\omega_i$.
If all $\omega_i \ge 0$, stop and output the new NE $u$.
If one of $\omega_i$ is negative, then go to Step~4.

\item [Step~4]
For each $i\in[N]$, update the set $\mcal{K}_i \coloneqq \mcal{K}_i \cup U_i$,
and then go back to Step~2.

\end{itemize}
\end{alg}

When $\allx^*$ is not an isolated KKT point, there may not
exist a satisfactory $\dt >0$ for Step~1.
For such a case, more investigation is required to verify
the completeness of the solution set or to find other NEs.
However, for generic NEPs, there are finitely many KKT points
(see Theorem~\ref{tm:genfinite} in the appendix).
The following is the convergence result for Algorithm~\ref{ag:allKKTSDP}.

\begin{theorem}
\label{tm:allnash}
Under the same assumptions in Theorem~\ref{tm:finiteconv},
if $\Theta$ is a generic positive definite matrix and
$\allx^*$ is an isolated KKT point,
then Algorithm~\ref{ag:allKKTSDP} must terminate after finitely many steps,
either returning an NE that is different from $\allx^*$
or reporting the nonexistence of other NEs.
\end{theorem}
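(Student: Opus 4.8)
The plan is to show that Algorithm~\ref{ag:allKKTSDP} terminates by a combinatorial finiteness argument on the feasible set of \reff{eq:upperlvoptall}, essentially mirroring the proof of Theorem~\ref{tm:finiteconv}, but with two extra ingredients: Proposition~\ref{prop:choice:dt} to handle Step~1, and the assumption that $\allx^*$ is an isolated KKT point. First I would argue that Step~1 terminates after finitely many rounds. Since $\allx^*$ is an isolated KKT point, part~(ii) of Proposition~\ref{prop:choice:dt} guarantees that there exists some $\dt_0 > 0$ for which $\allx^*$ is the unique feasible (hence the maximizing) point of \reff{eq:maxxi}. In Step~1 we start with an initial $\dt$ and, whenever the optimal value $\eta$ of \reff{eq:maxxi} strictly exceeds $\upsilon = [\allx^*]_1^T \Theta [\allx^*]_1$, we shrink $\dt$ to $\min(\dt/5, \eta - \upsilon)$. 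The geometric shrinking ensures $\dt \to 0$, so after finitely many iterations $\dt \le \dt_0$; at that point the feasible set of \reff{eq:maxxi} is contained in the $\dt_0$-version, which contains only $\allx^*$, so $\eta = \upsilon$ and Step~1 exits.

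Next I would analyze the loop through Steps~2--4. Once Step~1 is finished with a valid $\dt$, Proposition~\ref{prop:choice:dt}(i) tells us there is no Nash equilibrium $u \ne \allx^*$ with $[u]_1^T \Theta [u]_1 \le \upsilon + \dt$; equivalently every NE other than $\allx^*$ satisfies the last constraint $[\allx]_1^T \Theta [\allx]_1 \ge \upsilon + \dt$ of \reff{eq:upperlvoptall}, and it also satisfies all the KKT-type constraints and the cut inequalities $f_i(v,x_{-i}) - f_i(x_i,x_{-i}) \ge 0$ for $v \in \mc{K}_i$ (since those hold at every NE regardless of how $\mc{K}_i$ was built). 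Hence every NE $u \ne \allx^*$ is feasible for \reff{eq:upperlvoptall} throughout the algorithm. Now, just as in Theorem~\ref{tm:finiteconv}: the feasible set of \reff{eq:upperlvoptall} is contained in $\mc{G}$, the feasible set of \reff{eq:KKTfeasopt}. If a computed minimizer $u$ of \reff{eq:upperlvoptall} is verified by Step~3 not to be a NE (some $\omega_i < 0$), then Step~4 adds to $\mc{K}_i$ one or more optimizers of \reff{eq:lowerlvopt} at $u$, and the new cut $f_i(v, x_{-i}) - f_i(x_i, x_{-i}) \ge 0$ is violated by $u$ itself, so $u$ is removed from the feasible set of \reff{eq:upperlvoptall} in the next round while all NEs $\ne \allx^*$ remain feasible. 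Thus the feasible set of \reff{eq:upperlvoptall} strictly decreases within $\mc{G}$ after each non-terminating pass through Steps~2--4.

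To conclude, I would invoke the hypothesis inherited from Theorem~\ref{tm:finiteconv} that $\mc{G} \backslash \mc{G}^*$ is finite. Each non-NE minimizer $u$ removed in Step~4 lies in $\mc{G} \backslash \mc{G}^*$ (it is a KKT point but not a NE), and each removal is permanent, so after at most $|\mc{G} \backslash \mc{G}^*|$ passes the algorithm can no longer produce a non-NE minimizer. Therefore one of three things happens in finitely many steps: \reff{eq:upperlvoptall} becomes infeasible (and we correctly report no other NE, since every NE $\ne \allx^*$ would have been feasible), or Step~3 certifies the current $u$ as a NE (necessarily $u \ne \allx^*$ because $u$ satisfies $[\allx]_1^T \Theta [\allx]_1 \ge \upsilon + \dt > \upsilon$), or Step~4 is invoked at most $\ell$ times before one of the previous two outcomes occurs. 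This gives termination with the claimed output.

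The main obstacle, as I see it, is bookkeeping the interaction between the $\dt$-updates in Step~1 and the correctness conclusion in Step~2: one must be careful that the final $\dt$ used in \reff{eq:upperlvoptall} is one for which Proposition~\ref{prop:choice:dt}(i) applies, i.e., that Step~1 genuinely exits only with a $\dt$ making $\allx^*$ a maximizer of \reff{eq:maxxi}, and that shrinking $\dt$ never re-admits a spurious NE. This is exactly what the termination criterion $\eta = \upsilon$ encodes, and combining it with the geometric decay $\dt \le \dt/5$ and the existence of $\dt_0$ from part~(ii) closes the gap; the rest is a verbatim repetition of the finite-descent argument of Theorem~\ref{tm:finiteconv}.
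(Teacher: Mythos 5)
Your proposal is correct and follows essentially the same route as the paper's proof: Proposition~\ref{prop:choice:dt}(ii) to justify Step~1, Proposition~\ref{prop:choice:dt}(i) to show every NE other than $\allx^*$ stays feasible for \reff{eq:upperlvoptall}, and the finite-descent argument of Theorem~\ref{tm:finiteconv} for the loop through Steps~2--4. Your explicit argument that the geometric shrinking of $\dt$ forces Step~1 to exit in finitely many rounds is a detail the paper leaves implicit, but it is the same underlying mechanism.
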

\begin{proof}
Under the given assumptions, Proposition~\ref{prop:choice:dt}(ii)
shows the existence of $\dt > 0$ satisfactory
for the Step~1 of Algorithm~\ref{ag:allKKTSDP}.
Again, by Proposition~\ref{prop:choice:dt}(i),
the feasible set of \reff{eq:upperlvoptall} contains all NEs
except $\allx^*$. The finite termination of Algorithm~\ref{ag:allKKTSDP}
can be proved in the same way as for Theorem~\ref{tm:finiteconv}.
\end{proof}

Once a new NE is obtained, we can repeatedly apply Algorithm~\ref{ag:allKKTSDP},
to compute more NEs, if they exist. In particular, if there are finitely many NEs,
then we enumerate them as
\[
\left(\allx^{(1)}, \, \ldots, \, \allx^{(s)}\right).
\]
Without loss of generality, we assume
\[
[\allx^{(1)}]_1^T \Theta [\allx^{(1)}]_1 < \cdots
< [\allx^{(s)}]_1^T \Theta [\allx^{(s)}]_1 ,
\]
since $\Theta$ is generic. If the first $r$ NEs, say,
$\allx^{(1)}, \, \ldots, \, \allx^{(r)}$, are obtained,
there exists $\delta>0$ such that
\[
[\allx^{(j)}]_1^T \Theta [\allx^{(j)}]_1 >
    [\allx^{(r)}]_1^T \Theta [\allx^{(r)}]_1+\delta
\]
for all $j=r+1,\ldots,s$. Therefore, if we apply Algorithm~\ref{ag:allKKTSDP}
with $\allx^*=\allx^{(r)}$, the next Nash equilibrium $\allx^{(r+1)}$
can be obtained, if it exists.
Therefore, we have the following conclusion.

\begin{corollary} \label{cor:allNE}
Under the assumptions of Theorem~\ref{tm:allnash},
if there are finitely many Nash equilibria,
then all of them can be found by applying Algorithm~\ref{ag:allKKTSDP} repeatedly.
\end{corollary}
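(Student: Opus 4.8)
The plan is to turn the ordering argument sketched just before the statement into a clean induction on the equilibria, listed in increasing order of the value $[\allx]_1^T \Theta [\allx]_1$. First I would record two structural consequences of the hypotheses. By the assumptions inherited from Theorem~\ref{tm:finiteconv}, the complement $\mc G \backslash \mc G^*$ is finite; combined with the standing hypothesis that there are finitely many NEs, this makes $\mc G$ itself finite, so every point satisfying the KKT-type system \reff{eq:criticalpt} — in particular every NE — is an isolated KKT point. Hence the extra hypothesis of Theorem~\ref{tm:allnash} is automatically satisfied whenever a NE is used as the reference point $\allx^*$. Second, since $\Theta$ is a generic positive definite matrix, the numbers $[\allx^{(j)}]_1^T \Theta [\allx^{(j)}]_1$ are pairwise distinct over the (finite) set of equilibria, so we may enumerate them as $\allx^{(1)}, \ldots, \allx^{(s)}$ with strictly increasing $\Theta$-values (the case $s=0$ being vacuous, with Algorithm~\ref{ag:KKTSDP} reporting nonexistence).

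Next I would set up the induction. For the base case, Theorem~\ref{tm:finiteconv} guarantees that Algorithm~\ref{ag:KKTSDP} terminates; since $s\ge 1$ it returns a NE $u$, and at the terminating loop $u$ is a minimizer of the then-current instance of \reff{eq:upperlvopt}, whose feasible set contains every NE (each NE satisfies all the cut inequalities \reff{cutineq:NE}). Therefore $u$ attains the least $\Theta$-value among all NEs, i.e. $u=\allx^{(1)}$. For the inductive step, suppose $\allx^{(1)}, \ldots, \allx^{(r)}$ have already been produced. Apply Algorithm~\ref{ag:allKKTSDP} with $\allx^*=\allx^{(r)}$. By the first observation $\allx^{(r)}$ is an isolated KKT point, so Theorem~\ref{tm:allnash} applies: the call terminates after finitely many steps and either reports that no NE other than $\allx^{(r)}$ exists in/above the feasible region of \reff{eq:upperlvoptall} — which, since that feasible set contains every NE of $\Theta$-value exceeding $[\allx^{(r)}]_1^T\Theta[\allx^{(r)}]_1+\dt$, forces $r=s$ — or it returns a NE $u'\neq \allx^{(r)}$; being a minimizer of \reff{eq:upperlvoptall} over a feasible set that contains $\allx^{(r+1)},\ldots,\allx^{(s)}$ and no equilibrium of smaller $\Theta$-value, $u'$ must equal $\allx^{(r+1)}$. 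Iterating, after at most $s$ applications of Algorithm~\ref{ag:allKKTSDP} we have obtained $\allx^{(1)},\ldots,\allx^{(s)}$, and one further application reports nonexistence, certifying that the list is complete.

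The main obstacle is the bookkeeping in the inductive step: one must ensure that when Algorithm~\ref{ag:allKKTSDP} is invoked in round $r$, the point $\allx^{(r)}$ genuinely satisfies the hypotheses of Proposition~\ref{prop:choice:dt} and Theorem~\ref{tm:allnash}, i.e. that it is a minimizer of the current instance of \reff{eq:upperlvopt}. The earlier equilibria $\allx^{(1)},\ldots,\allx^{(r-1)}$ have smaller $\Theta$-value and cannot be removed by inequalities of the form \reff{cutineq:NE}, since every NE satisfies them; hence this is arranged not through the cut sets $\mc K_i$ but by keeping active the level-set constraint $[\allx]_1^T\Theta[\allx]_1 \ge [\allx^{(r-1)}]_1^T\Theta[\allx^{(r-1)}]_1+\dt_{r-1}$ carried over from round $r-1$, with $\dt_{r-1}$ chosen small enough — possible because genericity of $\Theta$ separates consecutive $\Theta$-values, and by Step~1 of Algorithm~\ref{ag:allKKTSDP} together with Proposition~\ref{prop:choice:dt}(ii) — that $\allx^{(r)}$ becomes the unique minimizer of the thus-restricted \reff{eq:upperlvopt} and a maximizer of the corresponding \reff{eq:maxxi}. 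Once this is in place, the remainder of each round follows verbatim from Theorem~\ref{tm:allnash} and its proof, and the induction closes.
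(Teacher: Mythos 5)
Your argument is correct and follows essentially the same route as the paper: order the finitely many equilibria by increasing value of $[\allx]_1^T\Theta[\allx]_1$ (pairwise distinct by genericity of $\Theta$) and induct, using the separation $\dt$ between $\allx^{(r)}$ and $\allx^{(r+1)},\ldots,\allx^{(s)}$ so that each call of Algorithm~\ref{ag:allKKTSDP} with $\allx^*=\allx^{(r)}$ produces $\allx^{(r+1)}$ or certifies $r=s$. Your additional observations — that finiteness of $\mc{G}\setminus\mc{G}^*$ together with finiteness of the set of equilibria makes every KKT point isolated, and that for $r\ge 2$ the reference point $\allx^{(r)}$ is a minimizer of the level-set-restricted problem carried over from the previous round rather than of \reff{eq:upperlvopt} itself — are correct refinements of details the paper leaves implicit.
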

\begin{remark}
Under the assumption of Theorem~\ref{tm:genfiniteconv},
the NEP has finitely many KKT points.
For such cases, Algorithm~\ref{ag:allKKTSDP} can find all NEs
and certify the completeness of solutions set within finitely many steps, by Corollary~\ref{cor:allNE}.
\end{remark}

\section{Solve polynomial optimization problems}
\label{sc:solvepop}

In this section, we discuss how to solve occurring polynomial optimization problems in Algorithms~\ref{ag:KKTSDP} and \ref{ag:allKKTSDP}.
For the NEP, we assume the constraining polynomial tuples
$g_i$ are all nonsingular.
Therefore, the Lagrange multipliers $\lmd_{i,j}$
can be expressed as polynomial functions $\lmd_{i,j}(\allx)$
as in \reff{polyexpr:lmdij} for all Nash equilibria.
We apply the Moment-SOS hierarchy of
semidefinite relaxations \cite{HKL20,Las01,LasBk15,Lau09}
for solving these polynomial optimization problems.
New convergence results for solving these polynomial optimization problems
are given due to the usage of polynomial expressions for Lagrange multipliers.

For the variable $z$ such that $z=x$ or $z=x_i$ for some $i\in[N]$,
denote by $l$ the dimension of $z$.
Consider the polynomial optimization problem in the variable $z$:
\be  \label{eq:genopt}
\left\{
\baray{ccl}
\vartheta^* \, \coloneqq \, &  \min\limits_{z\in\re^l} &
     \theta(z)   \\
&\st &  p(z) = 0 \  (\forall \, p \in \Phi),   \\
&    &  q(z) \ge 0 \  (\forall \, q \in \Psi).
\earay
\right.
\ee
In the above, $\Phi$ and $\Psi$ are sets of equality
and inequality constraining polynomials, respectively.
Denote the degree
\be\label{eq:d0}
d_0 \,  \coloneqq  \, \max\{\lceil\deg(p)/2\rceil: \,
 p \in \{\theta\}\cup \Phi \cup \Psi \}.
\ee
For a degree $k\ge d_0$, recall that the set $\idl[\Phi]_{2k}+\qmod[\Psi]_{2k}$
is introduced in Section~\ref{sc:idqm}.
The $k$th order SOS relaxation for (\ref{eq:genopt}) is
\be
\label{eq:d-sos}
\left\{
\baray{ccl}
\vartheta_{sos}^{(k)} \, \coloneqq \, & \max & \gamma\\
& \st & \theta -\gamma \in  \idl[\Phi]_{2k}+\qmod[\Psi]_{2k}. \\
\earay
\right.
\ee
The dual problem of (\ref{eq:d-sos}) is the $k$th order moment relaxation
\be
\label{eq:d-mom}
\left\{
\baray{ccl}
\vartheta^{(k)}_{mom}  \,  \coloneqq   \, & \min\limits_{ y } & \lip \theta,y \rip\\
 &\st & y_0=1,\, L_{p}^{(k)}[y] = 0 \ (p \in \Phi), \\
 & & M_d[y] \succeq 0,\, L_{q}^{(k)}[y] \succeq 0 \ (q \in \Psi),  \\
 & &  y \in \mathbb{R}^{\mathbb{N}^{l}_{2k}},
\earay
\right.
\ee
where the moment matrix $M_k[y]$ and localizing matrices
$L_{p}^{(k)}[y],\, L_{q}^{(k)}[y]$ are given by (\ref{df:Lf[y]}) and (\ref{eq:Mdy}).
Both (\ref{eq:d-sos}) and (\ref{eq:d-mom}) are semidefinite programs,
and the primal-dual pair is called the Moment-SOS relaxations
for the polynomial optimization problem (\ref{eq:genopt}).
If $z\in\re^l$ is a feasible point of (\ref{eq:genopt}),
then $[z]_k\in\re^l_{2k}$ must be a feasible point of (\ref{eq:d-mom}).
Thus (\ref{eq:genopt}) has an empty feasible set if (\ref{eq:d-mom}) is infeasible.
When (\ref{eq:d-mom}) has a nonempty feasible set,
it is clear that $\vartheta^{(k)}_{mom}\le \vartheta^{(k)}_{sos}\le \vartheta^*$
for all $k$, and both $\vartheta^{(k)}_{mom}$
and $\vartheta^{(k)}_{sos}$ are monotonically increasing.
The following is the Moment-SOS algorithm for solving \reff{eq:genopt}.

\begin{alg} \label{ag:momsos}
For the polynomial optimization problem (\ref{eq:genopt}),
let $d_0$ be the degree given by (\ref{eq:d0}).
\begin{itemize}

\item [Step~0]
Initialize $k \coloneqq d_0$.

\item [Step~1]
Solve the moment relaxation (\ref{eq:d-mom}).
If it is infeasible, then the polynomial optimization problem (\ref{eq:genopt})
is infeasible and stop;
otherwise, solve (\ref{eq:d-mom}) for the minimum value $\vartheta^{(k)}_{mom}$
and a minimizer $y^{(k)}$.

\item [Step~2]
Let $t \coloneqq d_0$.
If $y^*$ satisfies the rank condition
\be \label{eq:flatrank}
 \Rank{M_t[y^*]} \,=\, \Rank{M_{t-d_0}[y^*]} ,
\ee
then extract a set $U_i$ of
 $r  \coloneqq \Rank{M_t(y^*)}$ minimizers for (\ref{eq:genopt}) and stop.

\item [Step~3]
If \reff{eq:flatrank} fails to hold and $t < k$,
let $t  \coloneqq  t+1$ and then go to Step~2;
otherwise, let $k  \coloneqq  k+1$ and go to Step~1.

\end{itemize}
\end{alg}

Algorithm~\ref{ag:momsos} is known as the Moment-SOS hierarchy of
semidefinite relaxations \cite{Las01}.
We say the Moment-SOS hierarchy has asymptotic convergence if
$\vartheta^{(k)}_{sos}\to \vartheta^*$ as $k\to \infty$,
and we say it has finite convergence if $\vartheta^{(k)}_{sos}= \vartheta^*$
for all $k$ that is large enough.
For a general polynomial optimization problem, if $\idl[\Phi]+\qmod[\Psi]$ is archimedean,
then $\vartheta^{(k)}_{mom}\to \vartheta^*$ as $k\to \infty$ \cite{Las01}.
In Step~2, the rank condition~\reff{eq:flatrank} is called \textit{flat truncation} \cite{nie2013certifying}.
It is a sufficient (and almost necessary) condition to check
the finite convergence of moment relaxations.
When \reff{eq:flatrank} holds, the method in \cite{HenLas05}
can be used to extract $r$ minimizers for \reff{eq:genopt}.
This method and Algorithm~\ref{ag:momsos} are implemented in the software {\tt GloptiPoly 3}~\cite{GloPol3}.
In the following subsections,
we study the convergence result of Algorithm~\ref{ag:momsos}
when it is applied for solving (\ref{eq:lowerlvopt}),
(\ref{eq:upperlvopt}), \reff{eq:upperlvoptall} and \reff{eq:maxxi}.

\subsection{The optimization for all players}

We discuss the convergence of Algorithm~\ref{ag:momsos}
for solving (\ref{eq:upperlvopt}), \reff{eq:upperlvoptall} and \reff{eq:maxxi}.

First, we consider \reff{eq:upperlvopt}.
Let
\be\label{eq:z=x1}z\, \coloneqq \,x,\quad \theta(x)\, \coloneqq \,[x]_1^T\Theta[x]_1,\ee
and we denote the polynomial tuples
\be \label{poly:Phi:ith}
\begin{array}{l}
\Phi_i \, \coloneqq \, \Big \{\nabla_{x_i} f_i(\allx)
  -\sum_{j=1}^{m_i}\lambda_{i,j}(\allx)\nabla_{x_i} g_{i,j}(\xpi) \Big \} \\
\qquad\qquad\qquad\  \cup \Big \{ g_{i,j}(\xpi): j \in \mcal{E}_i \Big  \}
 \cup \Big \{\lambda_{i,j}(\allx)\cdot g_{i,j}(\xpi): j \in \mcal{I}_i \Big  \} ,
\end{array}
\ee
\be \label{poly:Psi:ith}
\begin{array}{l}
\Psi_i \, \coloneqq \,  \Big \{ g_{i,j}(\xpi): j \in \mcal{I}_i \Big  \} \cup
\Big \{\lambda_{i,j}(\allx): j \in \mcal{I}_i \Big  \} \\
\qquad\qquad\qquad\qquad\qquad  \cup \Big \{f_i(v,\xmi)-f_i(\xpi,\xmi): \, v \in \mcal{K}_i  \Big \}  .
\end{array}
\ee
In the above, for a vector $p = (p_1,\ldots, p_s)$ of polynomials,
the set $\{ p \}$ stands for $\{p_1, \ldots, p_s\}$, for notational convenience.
Denote the unions
\be \label{union:Phi:Psi}
\Phi \,  \coloneqq \bigcup_{i=1}^N \Phi_i,  \quad
\Psi \, \coloneqq \,\bigcup_{i=1}^N \Psi_i.
\ee
They are both finite sets of polynomials.
Then, the optimization (\ref{eq:upperlvopt}) can be written as (\ref{eq:genopt}),
and we may apply Algorithm~\ref{ag:momsos} for solving it.
Recall that $e_i$ is the vector in $\re^n$ such that
its $i$th entry is $1$ and all other entries are zero.
For a tms $y\in\re^{\N^n_{2k}}$, the $y_{e_i}$
means the entry of $y$ labelled by $e_i$.
For example, when $n=4$, $y_{e_2} = y_{0100}$.
Let $y^{(k)}$ be a minimizer of the $k$th order moment relaxation (\ref{eq:d-mom})
for (\ref{eq:upperlvopt}), and denote
\be\label{eq:uk}u^{(k)}\, \coloneqq \,(y^{(k)}_{e_1},y^{(k)}_{e_2}\ddd y^{(k)}_{e_n}).\ee
Then, $u^{(k)}$ is a minimizer of (\ref{eq:upperlvopt}) if $u^{(k)}$ is feasible for (\ref{eq:upperlvopt}) and $\lip \theta, y^{(k)}\rip=\theta(u^{(k)})$.
Moreover, we have the following convergence result for solving (\ref{eq:upperlvopt}):
\begin{theorem} \label{thmcvg:upconv}
For the polynomial optimization problem (\ref{eq:upperlvopt}),
assume $\Theta$ is a generic positive definite matrix.
Let $z \coloneqq x$, and let $\theta,\Psi,\Phi$ be given as in
(\ref{eq:z=x1})-(\ref{union:Phi:Psi}).
Suppose $\idl[\Phi]+\qmod[\Psi]$ is archimedean.
\bit

\item [(i)]
If the optimization \reff{eq:upperlvopt} is infeasible,
then the moment relaxation \reff{eq:d-mom}
must be infeasible when the order $k$ is big enough.

\item [(ii)]
Suppose the optimization \reff{eq:upperlvopt} is feasible.
Let $u^{(k)}$ be given as in (\ref{eq:uk}).
Then $u^{(k)}$ converges to the unique minimizer of \reff{eq:upperlvopt}.
In particular, if the real zero set of $\Phi$ is finite,
then $u^{(k)}$ is the unique minimizer of \reff{eq:upperlvopt}
and (\ref{eq:flatrank}) holds at $y^{(k)}$
with the rank equals $1$ when $k$ is sufficiently large.

\eit
\end{theorem}
\begin{proof}
(i) If \reff{eq:upperlvopt} is infeasible,  the constant polynomial $-1$
can be viewed as a positive polynomial on the feasible set of \reff{eq:upperlvopt}.
Since $\idl[\Phi]+\qmod[\Psi]$ is archimedean,
we have $-1 \in  \idl[\Phi]_{2k}+\qmod[\Psi]_{2k}$,
for $k$ big enough, by Putinar's Positivstellensatz \cite{putinar1993positive}.
For such a big $k$, the SOS relaxation \reff{eq:d-sos} is unbounded from above,
hence the moment relaxation \reff{eq:d-mom} must be infeasible.

(ii) When the optimization \reff{eq:upperlvopt} is feasible,
it must have minimizers.
Let $K$ be the feasible set of \reff{eq:upperlvopt},
and
\[\mcal{R}_2(K)\, \coloneqq \, cone(\{[u]_2: u\in K\}).\]
In the above, the $cone$ means the conic hull.
Consider the moment optimization problem
\be\label{eq:momopt}
\left\{ \begin{array}{cl}
\min\limits_{w} & \lip\theta, w\rip\\
\st & y_{0}=1,\ w\in\mcal{R}_2(K).
\end{array} \right.
\ee
If the matrix $\Theta$ is a generic positive definite matrix,
then the function $\theta$ is generic in $\Sigma_{n,2}$.
By \cite[Proposition~5.2]{nie2014moment},
the moment optimization problem (\ref{eq:momopt}) has a unique minimizer.
When (\ref{eq:momopt}) has minimizers,
its minimum value equals $\vartheta^*$.
Suppose (\ref{eq:upperlvopt}) has two distinct minimizers,
say, $\allx^{(1)}$ and $\allx^{(2)}$.
Then, $[\allx^{(1)}]_2$ and $[\allx^{(2)}]_2$
are two distinct minimizers of (\ref{eq:momopt}),
a contradiction to the uniqueness of the minimizer for (\ref{eq:momopt}).
Therefore, (\ref{eq:upperlvopt}) must have a unique minimizer $x^*$
when $\Theta$ is generic.

The convergence of $u^{(k)}$ to $\allx^*$ is shown in
\cite{Swg05} or \cite[Theorem~3.3]{nie2013certifying}.
For the special case that $\Phi(\allx)=0$ has finitely many real solutions,
the point $u^{(k)}$ must equal $\allx^*$, when $k$ is large enough.
This is shown in \cite{lasserre2008semidef} (also see \cite{nie2013polynomial}).
\end{proof}

The archimedeanness of $\idl[\Phi]+\qmod[\Psi]$
is essentially requiring that the feasible set of (\ref{eq:upperlvopt})
is compact.
If the real zero set of $\Phi$ is compact,
then $\idl[\Phi]+\qmod[\Psi]$ must be archimedean.
In particular, if the NEPP has finitely many real KKT points,
then $\idl[\Phi]+\qmod[\Psi]$ is archimedean.
Interestingly, when the objective and constraining polynomials are generic,
there are finitely many KKT points.
See Theorem~\ref{tm:genfinite} in the appendix.
In fact, as shown in the proof of Theorem~\ref{tm:genfinite},
the zero set of $\Phi$ is finite
for generic NEPPs, and hence Algorithm~\ref{ag:momsos} has finite convergence.
Moreover, by Theorem~\ref{thmcvg:upconv},
when $\Theta$ is generic and the minimizer $y^{(k)}$
for (\ref{eq:d-mom}) is obtained, one may let $u^{(k)}$
be given as in (\ref{eq:uk}) and directly check if $u^{(k)}$
is the unique minimizer or not,
instead of checking the flat truncation (\ref{eq:flatrank}).

The other minimization problem \reff{eq:upperlvoptall}
can be solved in the same way by Algorithm~\ref{ag:momsos}.
The convergence property is the same.
For the cleanness of the paper, we omit the details.

For the maximization \reff{eq:maxxi},
we let $z \coloneqq  x$ and
\be\label{eq:zx2}
\theta(x)\, \coloneqq \, -[x]_1^T \Theta [x]_1.\ee
Recall that the polynomial tuples $\Phi_i$ and $\Psi_i$ are given by (\ref{poly:Phi:ith}-\ref{poly:Psi:ith}).
Denote the set of polynomials
\be \label{newdef:Psi}
\Phi \,  \coloneqq \, \bigcup_{i=1}^N \Phi_i , \quad
\Psi \,  \coloneqq \, \bigcup_{i=1}^N \Psi_i \cup
\Big\{
[\allx^*]_1^T \Theta [\allx^*]_1 + \dt - [\allx]_1^T \Theta [\allx]_1
\Big\} .
\ee
Then \reff{eq:maxxi} can be equivalently written as (\ref{eq:genopt}).
Similarly, Algorithm~\ref{ag:momsos} can be used to solve \reff{eq:maxxi}.
The optimization \reff{eq:maxxi} is always feasible because $\allx^*$ is a feasible point.
Therefore, the moment relaxation \reff{eq:d-mom} is also feasible,
and there is no need to check its feasibility in Step~1 of Algorithm~\ref{ag:momsos}.
Since the minimum value $\vartheta^{(k)}_{mom}$ is a lower bound of $\vartheta^*$,
if $\vartheta^{(k)}_{mom}\ge -[\allx^*]_1^T \Theta [\allx^*]_1$, then
\[
\vartheta^{(k)}_{mom}=\vartheta^*=-[\allx^*]_1^T \Theta [\allx^*]_1,
\]
and $\allx^*$ is a maximizer of \reff{eq:maxxi}.
When $\vartheta_k< -[\allx^*]_1^T \Theta [\allx^*]_1$, the flat truncation condition~\reff{eq:flatrank} can be applied for
checking the finite convergence of the Moment-SOS hierarchy.
Under some classical optimality conditions,
we have $\vartheta^{(k)}_{mom}=\vartheta^*$ when $k$
is large enough \cite{nie2014optimality}.
Moreover, if the real zero set of $\Phi$ is finite,
then the Moment-SOS hierarchy has finite convergence and (\ref{eq:flatrank}) holds \cite{nie2013polynomial}.
We would like to remark that when the NEP is given by generic polynomials,
the complex zero set of $\Phi$ is finite (see Theorem~\ref{tm:genfinite}), thus Algorithm~\ref{ag:momsos} has finite convergence.

\subsection{Checking Nash equilibria}
\label{sc:loweropt}

Suppose $u$ is a minimizer of \reff{eq:upperlvopt}.
To check if $u=(u_i,u_{-i})$ is an NE or not, we need to solve the individual
optimization \reff{eq:lowerlvopt} for all $i\in [N]$.

For the given $u\in\re^n$ and $i\in[N]$,
\reff{eq:lowerlvopt} is a polynomial optimization problem in the variable $x_i$.
If (\ref{eq:lowerlvopt}) is unbounded from below, then $u$ cannot be an NE,
and the point $v$ for precluding $u$ can be obtained by
adding a suitable extra ball constraint.
In the following, we suppose that the minimum of (\ref{eq:lowerlvopt}) is attainable.
Since we assume that the polynomial tuple $g_i(\xpi)$ is nonsingular,
polynomial expressions for Lagrange multiplier expressions exist
and can be applied to solve (\ref{eq:lowerlvopt}).
Let $\lmd_i(x)$ be the Lagrange multiplier expressions in (\ref{polyexpr:lmdij}).
Note that the nonsingularity of $g_i$ implies the LICQC holds at every $x_i\in X_i$.
Every minimizer of (\ref{eq:lowerlvopt})
must be a KKT point of (\ref{eq:lowerlvopt}).
Therefore, (\ref{eq:lowerlvopt}) is equivalent to
the following polynomial optimization problem:
\be  \label{eq:rewtlower}
\left\{
\baray{rcl}
\omega_i \, \coloneqq \,  &\min\limits_{x_i \in \re^{n_i} }  & f_i(\xpi, u_{-i})-f_i(u_i,u_{-i}) \\
&\st & \nabla_{x_i}f_i(\xpi, u_{-i}) -
 \sum\limits_{j=1}^{m_i}\lambda_{i,j}(\xpi, u_{-i} ) \nabla_{x_i} g_{i,j}(\xpi)=0, \\
    && g_{i,j}(\xpi, u_{-i})=0\ (j\in\mcal{E}_i),\\
    && g_{i,j}(\xpi, u_{-i})\lmd_{i,j}(\xpi, u_{-i})=0 \ (j\in\mcal{I}_i),\\
    && g_{i,j}(\xpi, u_{-i})\ge0\ (j\in\mcal{I}_i),\\
    && \lmd_{i,j}(\xpi, u_{-i})\ge0\ (j\in\mcal{I}_i).
\earay
\right.
\ee
We introduce the convergence result of Algorithm~\ref{ag:momsos}
for solving (\ref{eq:rewtlower}).
Let
\be\label{eq:zx3}
z\, \coloneqq \, x_i,\quad \theta(x_i)\, \coloneqq \, f_i(x_i, u_{-i})-f_i(u_i, u_{-i}),
\ee
\be \label{polylow:Hi(u)}
\begin{aligned}
\Phi \, \coloneqq \, &\big \{g_{i,j}(\xpi): j \in \mcal{E}_i \big \} \cup
\big \{ \lambda_{i,j}(\xpi,u_{-i})\cdot g_{i,j}(\xpi): j \in \mcal{I}_i \big \}  \\
& \qquad \qquad\qquad\cup \big \{ \nabla_{x_i}f_i(\xpi, u_{-i}) -
 \sum\nolimits_{j=1}^{m_i}\lambda_{i,j}(\xpi, u_{-i} ) \nabla_{x_i} g_{i,j}(\xpi) \big \} ,
\end{aligned}
\ee
\be  \label{polylow:Gi(u)}
\Psi \, \coloneqq \, \big \{g_{i,j}(\xpi): j \in \mcal{I}_i \big \} \cup
\big \{ \lambda_{i,j}(\xpi,u_{-i}) : j \in \mcal{I}_i \big \}   .
\ee
Like earlier cases, the set $\{ p \}$ stands for $\{p_1, \ldots, p_s\}$,
when $p = (p_1,\ldots, p_s)$ is a vector of polynomials.
Then, the (\ref{eq:rewtlower}) can be rewritten as (\ref{eq:genopt}),
and the Moment-SOS relaxations of (\ref{eq:rewtlower}) are given by (\ref{eq:d-sos}) and (\ref{eq:d-mom}).
We would like to remark that the optimization \reff{eq:rewtlower}
is always feasible since $u_i$ is in its feasible set.
Thus the moment relaxation (\ref{eq:d-mom}) for \reff{eq:rewtlower}
is also feasible, and there is no need to check the feasibility for (\ref{eq:d-mom})
in the first step of Algorithm~\ref{ag:momsos}.
Moreover, the minimum $\vartheta^{(k)}_{mom}$ of (\ref{eq:d-mom})
is a lower bound for $\omega_i$, and $\omega_i\le 0$.
If $\vartheta^{(k)}_{mom}\ge0$ for some $k\ge d_0$,
then $\omega_i$ must be $0$, and we can stop Algorithm~\ref{ag:momsos}
immediately because this implies that $u_i$ is the minimizer for $\mathbf{F}_i(u_{-i})$.
If $\vartheta^{(k)}_{mom}<0$, we need to apply the flat truncation (\ref{eq:flatrank})
to certify if the finite convergence for the Moment-SOS hierarchy is achieved or not.
The following theorem concerns the finite convergence of Algorithm~\ref{ag:momsos}
for solving (\ref{eq:rewtlower}).
Its proof follows from \cite[Theorem~4.4]{Nie2018saddle}.

\begin{theorem}  \label{thmcvg:lowopt}
Assume the $i$th player's constraining polynomial tuple $g_i$
is nonsingluar and its optimization \reff{eq:lowerlvopt}
has a minimizer for the given $u_{-i}$.
Let $z \coloneqq x_i$, and let $\theta,\Psi,\Phi$
be given as in (\ref{eq:zx3})-(\ref{polylow:Gi(u)}).
Assume either one of the following conditions hold:
\bit

\item [(i)]
The set $\idl[\Phi]+\qmod[\Psi]$ is archimedean,

\item [(ii)]
The real zero set of polynomials in $H_i(u)$ is finite.

\eit
If each minimizer of \reff{eq:rewtlower} is an isolated critical point,
then all minimizers of \reff{eq:d-mom}
must satisfy the flat truncation \reff{eq:flatrank},
for all $k$ big enough.
Therefore, Algorithm~\ref{ag:momsos} must terminate within
finitely many loops.
\end{theorem}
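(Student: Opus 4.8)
The plan is to reduce Theorem~\ref{thmcvg:lowopt} to known convergence results for the Moment-SOS hierarchy applied to a fixed polynomial optimization problem, namely \reff{eq:rewtlower}. The crucial observation is that \reff{eq:rewtlower} is \emph{not} an arbitrary relaxation of \reff{eq:lowerlvopt}: because $g_i$ is nonsingular, the Lagrange multiplier expressions $\lmd_{i,j}(x_i,u_{-i})$ are genuine polynomials and, by the discussion following \reff{HiGi=Isi}, LICQC holds at every minimizer of \reff{eq:lowerlvopt}; hence the feasible set of \reff{eq:rewtlower} is precisely the KKT set of \reff{eq:lowerlvopt}, and its global minimizers coincide with the global minimizers of \reff{eq:lowerlvopt} (since under the hypothesis \reff{eq:lowerlvopt} attains its minimum and every minimizer is a KKT point). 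So the minimum value of \reff{eq:rewtlower} equals $\omega_i$, and its minimizer set is contained in the minimizer set of \reff{eq:lowerlvopt}, which by hypothesis consists of isolated critical points.

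First I would record that under either (i) or (ii) the moment relaxation \reff{eq:d-momlow} converges to the optimal value $\omega_i$: under (i) this is Lasserre's theorem via Putinar's Positivstellensatz, and under (ii) it is the finite-zero-set argument of \cite{lasserre2008semidef,nie2013polynomial}. Next I would invoke \cite[Theorem~4.4]{Nie2018saddle}: the content there is exactly that when a polynomial optimization problem has the Lagrange-multiplier-expression structure (so the constraint set is the KKT set via polynomial multiplier expressions) and every minimizer is an isolated critical point, then for all sufficiently large $k$ every minimizer $y^*$ of the $k$th moment relaxation satisfies the flat truncation condition \reff{eq:flatranklow}. The argument behind that result is that the truncated moment sequence $y^*$, being optimal, must be "supported" asymptotically on the minimizer set; isolatedness of the critical points forces this support to be finite and $y^*$ to become a convex combination of the moment vectors $[v]_{2k}$ over finitely many minimizers $v$, at which point flatness holds. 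I would then note that once \reff{eq:flatranklow} holds, Algorithm~\ref{ag:lowsdp} terminates in Step~2 (extracting the minimizers), and if instead $\omega_i^{(k)}\ge 0$ it terminates already in Step~1; since $\omega_i^{(k)}\to\omega_i$, after finitely many loops either $\omega_i^{(k)}\ge 0$ (when $\omega_i=0$, an edge case) or the flat truncation kicks in, so in all cases the algorithm stops after finitely many loops.

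The main obstacle is verifying that \cite[Theorem~4.4]{Nie2018saddle} applies \emph{verbatim} here rather than merely in spirit: one must check that the constraint tuples $H_i(u)$ and $G_i(u)$ defined in \reff{polylow:Hi(u)}--\reff{polylow:Gi(u)} have exactly the structural form required in \cite{Nie2018saddle} — in particular that the complementarity-type equalities $\lmd_{i,j}(x_i,u_{-i})\,g_{i,j}(x_i)=0$ together with the gradient equation encode the KKT system faithfully, and that the fixed parameter $u_{-i}$ causes no loss of the required genericity/nonsingularity. I would handle this by explicitly matching notation: the polynomial $\hat f_i$, the matrix $H_i(x_i)$ from \reff{HiGi=Isi}, and the expressions \reff{polyexpr:lmdij} are precisely the ingredients of the "Lagrange multiplier expression" framework, so the hypotheses of \cite[Theorem~4.4]{Nie2018saddle} translate directly. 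A secondary, minor point to address is the boundary behavior of flat truncation when $t$ ranges between $d_i$ and $k$ in Step~2--Step~3; but since flatness at the ground level $t=d_i$ is what the cited theorem delivers for large $k$, the inner loop over $t$ only helps and does not affect finite termination.
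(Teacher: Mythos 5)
Your proposal is correct and follows essentially the same route as the paper: the paper's entire proof is the single remark that the result ``follows from [Theorem~4.4] of \cite{Nie2018saddle},'' which is exactly the reduction you carry out, including the identification of the feasible set of \reff{eq:rewtlower} with the KKT set of \reff{eq:lowerlvopt} via the Lagrange multiplier expressions. Your additional care in matching the structural hypotheses of the cited theorem and in handling the $\omega_i^{(k)}\ge 0$ termination branch only makes explicit what the paper leaves implicit.
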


We remark that if $\idl[g_{i,j}: j \in \mcal{E}_i] +
\qmod[g_{i,j}: j \in \mcal{I}_i]$ is archimedean,
then $\idl[\Phi]+\qmod[\Psi]$ is also archimedean.
Therefore, if the archimedeanness holds for
the $i$th player's optimization \reff{eq:subprob},
then the condition (i) in Theorem~\ref{thmcvg:lowopt} is satisfied.

\section{Numerical Experiments}
\label{sc:num}

This section reports numerical experiments for solving NEPs
by Algorithms~\ref{ag:KKTSDP} and \ref{ag:allKKTSDP}.
For all polynomial optimization problems appearing in the algorithms,
we apply the software {\tt GloptiPoly~3} \cite{GloPol3}
to formulate Moment-SOS semidefinite relaxations,
and use {\tt SeDuMi} \cite{sturm1999using}
for solving these semidefinite programs.
The computation is implemented in an Alienware Aurora R8 desktop,
with an Intel\textsuperscript \textregistered \
Core(TM) i7-9700 CPU at 3.00GHz$\times $8 and 16GB of RAM,
in a Windows 10 operating system.

For ball and simplex constraints,
the expressions are given by (\ref{eq:simplme}) and (\ref{eq:balllme}) respectively.
Polynomial expressions of Lagrange multipliers for
other types of constraints are given in the descriptions of each example.
In Step~2 of Algorithm~\ref{ag:KKTSDP} and Step~3 of Algorithm~\ref{ag:allKKTSDP},
if the optimal value $\omega_i \ge 0$ for all players, then the point $u$ is an NE.
In numerical computation, we cannot have $\omega_i \ge 0$ exactly,
due to round-off errors.
Therefore, we use the parameter
\[
\omega^* \,  \coloneqq  \, \min_{i=1,\ldots,N} \omega_i
\]
to measure the accuracy of the computed NE.
Typically, if $\omega^*$ is small, say, $\omega^* \ge -10^{-6}$,
then we regard the computed solution as an NE.

\begin{example}\rm
For the convex NEP in Example~\ref{ep:isolatedep},
Algorithm~\ref{ag:KKTSDP} found the NE
\[x_1^*=(-1.0000, 0.0000),\quad x_2^*=(0.4472, 0.8944)\]
in the first loop, as shown in Example~\ref{ep:cvxcomp}.
The accuracy parameter is $\omega^*=-7.9793\cdot 10^{-9}$.
Then,
we ran Algorithm~\ref{ag:allKKTSDP} and found two more NEs,
which are
\[\begin{array}{ll}x^*_1=(-0.0000,0.0000),\  x^*_2=(0.0000,0.0000), &\
 \omega^* = - 1.4147\cdot 10^{-10};\\
x^*_1=(1.0000,-0.0000),\  x^*_2=(-0.4472,-0.8944), &\
 \omega^* = - 1.7829\cdot 10^{-8}.\end{array}\]
Moreover, Algorithm~\ref{ag:allKKTSDP}
certified that these three NEs are all solutions to this NEP.
It took around 1.40 seconds to find these two additional NEs
and certify the completeness of the solution set.
\end{example}

In the following example, we show that our algorithm can find NEs
for NEPs which have infinitely many KKT points.
\begin{example}\rm
\label{ep:infiKKT}
(i) Consider the convex NEP
\begin{equation*}
\mbox{1st player:} \left\{
\begin{array}{cl}
\min\limits_{x_1 \in \re^2} &  (x_{1,1}+x_{1,2}-x_{2,1}-x_{2,2})^2,  \\
\st & 1-(x_{1,1})^2-(x_{1,2})^2 \ge 0,
\end{array}\right.
\end{equation*}
\begin{equation*}
\mbox{2nd player:}   \left\{
\begin{array}{cl}
\min\limits_{x_2 \in \re^2}& (x_{2,1}-x_{1,1})^2+(1-x_{2,2})^2 \\
s.t. & 1-x_{2,1}-x_{2,2} \ge 0, \, x_{2,1} \ge 0, \, x_{2,2} \ge 0,
\end{array}\right.
\end{equation*}
then one may check that for each $\alpha\in[0,1/2]$,
$x_1=(2\alpha,1-2\alpha)$, $x_2=(\alpha,1-\alpha)$ is an NE.
Applying Algorithm~\ref{ag:KKTSDP},
we got the NE:
\[
x_1^* =(0.9247,0.0753),\  x_2^* = (0.4624, 0.5376), \ \omega^* = -2.1940\cdot 10^{-8}.
\]
The computation took about 0.19 second.

(ii) Consider the NEP
\begin{equation*}
\mbox{1st player:} \left\{
  \begin{array}{cl}
     \min\limits_{x_1 \in \re^2} &  -x_{2,1}(x_{1,1})^2-x_{2,2}x_{1,1}+(x_{2,2}-\frac{1}{2})x_{1,2}  \\
\st & 1-(x_{1,1})^2-(x_{1,2})^2 \ge 0,
  \end{array}\right.
\end{equation*}
\begin{equation*}
\mbox{2nd player:}   \left\{
\begin{array}{cl}
\min\limits_{x_2 \in \re^2}& x_{1,2}x_{2,1}+(x_{2,2}-\frac{1}{2})^2 \\
    s.t. & 1-x_{2,1}-x_{2,2} \ge 0, \, x_{2,1} \ge 0, \, x_{2,2} \ge 0.
  \end{array}\right.
\end{equation*}
One may check that for each $\alpha\in[-\frac{1}{4},0)$,
the $x_1=(\alpha,0)$, $x_2=(-\frac{1}{4\alpha},\frac{1}{2})$
is a KKT point that is not an NE.
Applying Algorithm~\ref{ag:KKTSDP}, we got the NE
\[
x_1^* =(1.0000,-0.0000),\ x_2^* = (0.4259, 0.5000), \ \omega^* = -6.2187\cdot 10^{-9}.
\]
The computation took about 0.33 second.
\end{example}

\begin{example}
\label{ep:oracle}
\rm
In this example, we consider NEPs with box constraints such that every $x_i\in\re^1$.
For each $i\in[N]$, the $i$th player's feasible set is given by
\[1+x_i\ge 0,\quad  1-x_i\ge0.\]
Then, the associated Lagrange multipliers can be expressed as
\[\lmd_{i,1} = \frac{1}{2}\frac{\partial f_i}{\partial x_{i}} \cdot (1-x_{i}),\quad
\lmd_{i,2} = \lmd_{i,1} - \frac{\partial f_i}{\partial x_{i}}.\]

(i) Consider the two-player zero-sum game with box constraints in \cite[Example~3.1]{parrilo2006polynomial}
(see also \cite[Example~1]{kroupa2023multiple}), where the objective functions are
\[f_1(x_1,x_2) = (x_1)^2-2x_1(x_2)^2  +x_2,\quad f_2(x_1,x_2) = -f_2(x_1,x_2). \]
Applying Algorithm~\ref{ag:KKTSDP}, we got the NE:
\[ x_1^* = 0.3969,\ x_2^* = 0.6300,\ \omega^* = -2.9179\cdot 10^{-11}  \]
in the initial loop.
It took around 0.54 second.

(ii) Consider the two-player game with box constraints in
\cite[Example~2.3]{stein2008separable} (see also \cite[Example~2]{kroupa2023multiple}),
where the objective functions are
\[\begin{aligned}f_1(x_1,x_2) = \, &2(x_1)^3+3(x_1x_2)^2-2x_1x_2+x_1-3(x_2)^3, \\
f_2(x_1,x_2) = \, & 4(x_2)^3 -2(x_1x_2)^2+(x_1)^2-(x_1)^2x_2-4x_2. \end{aligned}\]
Applying Algorithm~\ref{ag:KKTSDP}, we detected nonexistence of NEs in the third loop\footnote{We remark that for this NEP, as well as the NEP in Example~\ref{ep:oracle}(iii), though a (pure strategy) NE does not exist, there exist mixed strategy solutions. See \cite{kroupa2023multiple,stein2008separable} for more details.}.
It took around 0.85 second.

(iii) Consider the generalization of separable network games in
\cite[Example~5]{kroupa2023multiple}.
The objective functions are
\[\begin{aligned}
f_1(x_1,x_2,x_3) = \, & 2(x_1)^2+2x_1(x_2)^2 - 5x_1x_2 +4x_1x_3+x_2+2x_3, \\
f_2(x_1,x_2,x_3) = \, & 2(x_2)^2-2x_1(x_2)^2+5x_1x_2-5x_2x_3+2x_2(x_3)^2-x_2+2(x_1)^2,\\
f_3(x_1,x_2,x_3) = \, & -2x_2(x_3)^2-4x_1x_3+5x_2x_3-2x_3-4(x_1)^2-2(x_2)^2. \end{aligned}\]
Applying Algorithm~\ref{ag:KKTSDP}, we detected nonexistence of NEs in the second loop.
It took around 0.90 second.
\end{example}

For all NEPs in the following examples except Example~\ref{ep:bs},
our method found all NEs with certified completeness of solution sets.
In the following, we only report the numerical result of finding all solutions,
unless specifically mentioned, for the neatness of this paper.

\begin{example}
\label{ep:nonconvex2x3}
\rm
Consider the $2$-player NEP
\begin{equation*}
\mbox{1st player:} \left\{
  \begin{array}{cl}
    \min\limits_{x_1 \in \re^3} & \sum_{j=1}^3 x_{1,j}(x_{1,j}- j\cdot x_{2,j})\\
    s.t. & 1-x_{1,1}x_{1,2} \ge 0, \,1-x_{1,2}x_{1,3}\ge 0,\, x_{1,1} \ge 0,
  \end{array}\right.
\end{equation*}
\begin{equation*}
\mbox{2nd player:}  \left\{
  \begin{array}{cl}
    \min\limits_{x_2 \in \re^3} & 
    \prod_{j=1}^3 x_{2,j} +
    \sum\limits_{\mathclap{\substack{1\le i<j\le 3\\1\le k\le 3}}} x_{1,i}x_{1,j}x_{2,k}+
    \sum\limits_{\mathclap{\substack{1\le i\le 3\\1\le j<k\le 3}}} x_{1,i}x_{2,j}x_{2,k}
    \\
    s.t. & 1-(x_{2,1})^2 - (x_{2,2})^2 = 0.
  \end{array}\right.
\end{equation*}
The first player's optimization is non-convex, with an unbounded feasible set.
The Lagrange multipliers for the first player's optimization are
\be \nn
\begin{array}{lll}
  \lambda_{1,1}=(1-x_{1,1}x_{1,2})\frac{\partial f_1}{\partial x_{1,1}}, &
  \lambda_{1,2}=-x_{1,1}\frac{\partial f_1}{\partial x_{1,2}}, &
  \lambda_{1,3}=x_{1,1}\frac{\partial f_1}{\partial x_{1,1}}
      -x_{1,2}\frac{\partial f_1}{\partial x_{1,2}}.
\end{array}
\ee
Applying Algorithm \ref{ag:allKKTSDP}, we got four NEs:
\[
\begin{array}{ll}
x_1^*=(0.3198,  0.6396, -0.6396 ), &\ x_2^*=( 0.6396,    0.6396,    -0.4264); \\
x_1^*=(0.0000, 0.3895, 0.5842 ), &\ x_2^*=(-0.8346, 0.3895, 0.3895); \\
x_1^*=(0.2934, -0.5578, 0.8803 ), &\ x_2^*=( 0.5869, -0.5578,  0.5869);\\
x_1^*=(0.0000, -0.5774, -0.8660 ), &\ x_2^*=( -0.5774, -0.5774, -0.5774). \\
\end{array}
\]
Their accuracy parameters are respectively
\[
-7.1879\cdot 10^{-8}, \, -3.5040\cdot 10^{-7}, \, -4.3732\cdot 10^{-7},
-6.4360\cdot 10^{-7}.
\]
It took about $30$ seconds.

However, if the second player's objective becomes
\[
-\prod_{j=1}^3 x_{2,j} +
\sum\limits_{\mathclap{\substack{1\le i\le 3\\1\le j<k\le 3}}} x_{1,i}x_{2,j}x_{2,k}
-\sum\limits_{\mathclap{\substack{1\le i<j\le 3\\1\le k\le 3}}} x_{1,i}x_{1,j}x_{2,k},
\]
then there is no NE, which was detected by Algorithm~\ref{ag:KKTSDP}.
It took around $16$ seconds.
\end{example}

\begin{example} \rm
\label{ep:plain3x2ball_arc_box}
Consider the $3$-player NEP
\begin{equation*}
\mbox{1st player:}  \left\{
 \begin{array}{cl}
 \min\limits_{x_1 \in \re^2} &  (2x_{1,1}-x_{1,2}+3)x_{1,1}x_{2,1}\\
              &\qquad \qquad \qquad+[(2x_{1,2})^2+(x_{3,2})^2]x_{1,2}\\

\st  & 1 - x_1^Tx_1 \ge 0,
    \end{array}\right.
\end{equation*}
\begin{equation*}
\mbox{2nd player:}  \left\{
 \begin{array}{cl}
 \min\limits_{x_2 \in \re^2 } &  [(x_{2,1})^2-x_{1,2}]x_{2,1} \\
 &\qquad \quad+[(x_{2,2})^2+2x_{3,2}+x_{1,2}x_{3,1}]x_{2,2}\\
 \st &x_2^Tx_2-1 =0, \, x_{2,1} \ge 0, \, x_{2,2} \ge 0,
 \end{array}\right.
\end{equation*}
\begin{equation*}
\mbox{3rd player:}  \left\{
 \begin{array}{cl}
 \min\limits_{x_3 \in \re^2 }& (x_{1,1}x_{1,2}-1)x_{3,1}- [3(x_{3,2})^2+1]x_{3,2}\\
 &\qquad \qquad \qquad\quad+2[x_{3,1}+x_{3,2}]x_{3,1}x_{3,2} \\
\st & 1-(x_{3,1})^2 \ge 0, \, 1 - (x_{3,2})^2 \ge 0.
 \end{array}\right.
\end{equation*}
The Lagrange multipliers can be represented as
\[
\begin{array}{lll}
    \lambda_{2,1}=\frac{1}{2}(x_2^T\nabla_{x_2} f_2),
  & \lambda_{2,2}=\frac{\partial f_2}{x_{2,1}}-2x_{2,1}\lambda_{2,1},
  &\lambda_{2,3}=\frac{\partial f_2}{x_{2,2}}-2x_{2,2}\lambda_{2,1},\\
    \lambda_{3,1}=-\frac{x_{3,1}}{2}\frac{\partial f_3}{\partial x_{3,1}},
  &\lambda_{3,2}=-\frac{x_{3,2}}{2}\frac{\partial f_3}{\partial x_{3,2}}.
\end{array}
\]
Applying Algorithm~\ref{ag:allKKTSDP},  we got the unique NE
\[
x_1^* = (-0.3558, -0.9346), \
x_2^* = (1.0000,   0.0000), \
x_3^* = (-0.3331,  1.0000).
\]
The accuracy parameter is $-9.2310\cdot 10^{-9}.$
It took around $9$ seconds.

Nonetheless, if the third player's objective becomes $-f_1(\allx)-f_2(\allx)$,
then the NEP becomes a zero-sum game and there is no NE,
which was detected by Algorithm~\ref{ag:KKTSDP}.
It took around $3$ seconds.
\end{example}

\begin{example} \rm
\label{ep:ring}
Consider the $2$-player NEP
\begin{equation*}
\mbox{1st player:} \left\{
\begin{array}{cll}
  \min\limits_{x_1 \in \re^2} &  2x_{1,1}x_{1,2}+
  3x_{1,1}(x_{2,1})^2+3(x_{1,2})^2x_{2,2}\\
 \st &(x_{1,1})^2+(x_{1,2})^2-1\ge 0,  \\
     &2-(x_{1,1})^2-(x_{1,2})^2\ge 0,
  \end{array}\right.
\end{equation*}
\begin{equation*}
\mbox{2nd player:} \left\{
\begin{array}{cll}
  \min\limits_{x_2 \in \re^2 }& (x_{2,1})^3+(x_{2,2})^3+x_{1,1}(x_{2,1})^2 \\
  & \qquad \qquad +x_{1,2}(x_{2,2})^2 +x_{1,1}x_{1,2}(x_{2,1}+x_{2,2}) \\
  \st &(x_{2,1})^2+(x_{2,2})^2-1 \ge 0, \\
      &2-(x_{2,1})^2+(x_{2,2})^2 \ge 0.
  \end{array}\right.
\end{equation*}
The Lagrange multipliers can be represented as ($i=1,2$):
\[
\lambda_{i,1}=\frac{1}{2}\nabla_{x_i} f_i^Tx_i(2-x_i^Tx_i),\quad
 \lambda_{i,2}=\frac{1}{4}\nabla_{x_i} f_i^Tx_i(1-x_i^Tx_i).
 \]
By Algorithm \ref{ag:allKKTSDP}, we got the unique NE
\[
x_1^* =(-1.3339,0.4698), \quad x_2^* = (-1.4118,0.0820),
\]
with the accuracy parameter $-3.5186\cdot 10^{-8}.$
It took around $5$ seconds.
\end{example}

\begin{example}
\label{ep:bs}
\rm
Consider the NEP
\begin{equation*}
\baray{l}
\mbox{1st player:}  \left\{
\begin{array}{cl}
   \min\limits_{x_1 \in \re^{n_1}} &
   \sum_{1 \le i \le j \le n_1} x_{1,i}x_{1,j}(x_{2,i}+x_{2,j})  \\
\st   & 1-(x_{1,1}^2+\cdots+x_{1,n_1}^2) = 0,
\end{array}\right.
\\
\mbox{2nd player:}   \left\{
\begin{array}{cl}
    \min\limits_{x_2 \in \re^{n_2}} &
    \sum_{1 \le i\le j \le n_2} x_{2,i}x_{2,j}(x_{1,i}+x_{1,j})   \\
\st & 1-(x_{2,1}^2+\cdots+x_{2,n_2}^2) = 0,
\end{array}\right.
\earay
\end{equation*}
where $n_1=n_2$.
We ran Algorithm~\ref{ag:allKKTSDP} for cases $n_1=n_2=3,4,5,6$.
The computational results are shown in Table~\ref{tab:exm:bs}.
\begin{table}[htbp]
\centering
\caption{Computational results for Example~\ref{ep:bs}}
\label{tab:exm:bs}
\begin{tabular}{|l|c|c|r|}  \hline
$n_1$  & NE  & $\omega^*$   & time  \\ \hline
$3$  & $\begin{array}{l} x_1^* = (-0.5774,\, -0.5774, \, -0.5774)\\ x_2^* = (-0.5774,\, -0.5774, \, -0.5774) \end{array} $ & $-1.0689\cdot 10^{-7}$ & 1.31  \\ \hline
$4$  & $\begin{array}{l} x_1^* = (0.8381, \,0.5024,\\ \qquad\qquad \qquad  -0.0328 , \,-0.2098) \\ x_2^* = (-0.1791, \, -0.0683,\\ \qquad\qquad\qquad\quad   0.4066, \,0.8933) \end{array} $ & $-1.4459\cdot 10^{-9}$ & 62.85  \\ \hline
$5$  & $\begin{array}{l} x_1^* = (0.8466,\, 0.4407,\, 0.1744,\\ \qquad\qquad \qquad  -0.0101,\, -0.2418) \\ x_2^* = (-0.1944,\,-0.0512, \, 0.1238,\\ \qquad\qquad\qquad\quad   0.3370, \, 0.9114) \end{array} $ & $-2.7551\cdot 10^{-9}$ & 682.67  \\ \hline
$6$  & $\begin{array}{l} x_1^* = (0.8026,\, 0.4724,\, 0.1799,\\ \qquad\qquad 0.1799, \, -0.0637,\, -0.2527) \\ x_2^* = (-0.1979,\, -0.0772,\, 0.1091,\\ \qquad\qquad 0.1091, \, 0.4040, \, 0.8762) \end{array} $ & $-7.0354\cdot 10^{-9}$ & 18079.99  \\ \hline
\end{tabular}
\end{table}
In the table, $n_1$ is the dimension for variables $x_1$ and $x_2$,
the column `NE' shows the computed solutions to the NEP,
and $w^*$ is the accuracy parameter.
All time consumptions are displayed in seconds,
Because of the relatively large amount of computational time,
we only compute one NE for each case above.
\end{example}

We would like to remark that our method can also be applied to
solve unconstrained NEPs where all
individual optimization problems have no constraints,
or equivalently, the feasible set $X_i$ for \reff{eq:subprob}
is the entire space $\re^{n_i}$. For unconstrained NEPs,
the KKT system \reff{eq:KKTwithLM} becomes
\[
\nabla_{x_i} f_i(x^*) \, = \, 0, \quad i=1,\ldots, N,
\]
and Algorithms~\ref{ag:KKTSDP} and \ref{ag:allKKTSDP}
can be implemented in the same way.

\begin{example}
\label{ep:unconstrained}
\rm
Consider the unconstrained NEP
\begin{equation*}
\baray{l}
\mbox{1st player:}   \left\{
\begin{array}{cl}
   \min   &  \sum\limits_{i=1}^{n_1} (x_{1,i})^4 +
    \sum\limits_{0\le i\le j \le k  \le n_1}
     \frac{x_{1,i}x_{1,j} ( x_{1,k} + x_{2,i}+ x_{3,j})}{(n_1)^2} \\
 \st & x_1 \in \re^{n_1} ,
\end{array} \right.
\\
\mbox{2nd player:}  \left\{
\begin{array}{cl}
   \min  &  \sum\limits_{i=1}^{n_2} (x_{2,i})^4 +
   \sum\limits_{0\le i\le j \le k  \le n_2}
     \frac{x_{2,i}x_{2,j} ( x_{2,k} + x_{3,i}+ x_{1,j} )}{(n_2)^2} \\
\st   &   x_2 \in \re^{n_2},
\end{array}\right.
\\
\mbox{3rd player:}  \left\{
\begin{array}{cl}
   \min  &  \sum\limits_{i=1}^{n_3} (x_{3,i})^4 +
   \sum\limits_{0\le i\le j \le k  \le n_3}
     \frac{x_{3,i}x_{3,j} ( x_{3,k} + x_{1,i}+ x_{2,j} ) }{(n_3)^2} \\
\st   &   x_3 \in \re^{n_3},
\end{array}\right.
\earay
\end{equation*}
where $x_{1,0}=x_{2,0}=x_{3,0}=1$, and $n_1 = n_2 = n_3$.
We implement Algorithm~\ref{ag:allKKTSDP} for the cases $n_1=n_2=n_3=2,3,4,5,6$.
The computational results are shown in the following table.
For all cases, we computed an NE successfully
and obtained that $x_1^*=x_2^*=x_3^*$ (up to round-off errors).
There is a unique NE for each case.
The computational results are reported in Table~\ref{tab:exm:uncon}.
The time is displayed in seconds.
\begin{table}[htbp]
\caption{The computational results for Example~\ref{ep:unconstrained}.}
\centering
\label{tab:exm:uncon}
\begin{tabular}{|l|c|c|r|}  \hline
$n_1$  & $x_1^* = x_2^* = x_3^*$  & $\omega^*$ & time \\ \hline
$2$ & $(-0.8410,\quad -0.7125)$  & $-8.8291\cdot 10^{-9}$ & 0.34  \\\hline
$3$ & $(-0.6743,-0.6157,-0.5236)$  & $-6.6507\cdot 10^{-9}$ & 1.58  \\\hline
$4$ & $\begin{array}{ll}    (-0.5950,-0.5606\\
   \qquad\qquad-0.5097,-0.4363)\end{array}$
  & $-1.0577\cdot 10^{-9}$ & 16.86  \\\hline
$5$ & $\begin{array}{l}    (-0.5476,-0.5247,-0.4919,\\\qquad\qquad-0.4472,-0.3860)\end{array}$
  & $-4.4438\cdot 10^{-9}$ & 177.63  \\\hline
$6$ & $\begin{array}{l}    (-0.5157,-0.4992,-0.4762,\\-0.4457,-0.4060,-0.3534)\end{array}$
  & $-3.7536\cdot 10^{-9}$ & 1379.27  \\\hline

\end{tabular}
\end{table}
\end{example}

The following are some examples of NEPs from applications.

\begin{example}\rm
Consider the environmental pollution control problem for three countries
for the case \textit{autarky} \cite{breton2006game}.
Let $ x_{i,1} (i= 1,2,3) $ denote the (gross) emissions
from the $i$th country.
The revenue of the $i$th country depends on $ x_{i,1}$,
e.g., a typically one is $x_{i,1}(b_i- \frac{1}{2} x_{i,1})$.
The variable $x_{i,2} $ represents the investment by the $i$th country
to local environmental projects.
The net emission in country $i$ is $x_{i,1}-\gamma_{i}x_{i,2}$,
which is always nonnegative and must be kept below or
equal to a certain prescribed level $E_i>0$
under an environmental constraint.
The damage cost of the $i$th country is assumed to be
$d_i( x_{i,1}-\gamma x_{i,2})+\sum_{j\ne i}c_{i,j}x_{i,2}x_{j,1}.$
For given parameters $b_i,c_{i,j},d_i,\gamma_i,E_i$,
the $i$th ($i=1,2,3$) country's optimization problem is
\[
\left\{ \begin{array}{cl}
 \min\limits_{ x_i \in \re^2} &-x_{i,1}(b_i-\frac{1}{2} x_{i,1})+\frac{(x_{i,2})^2}{2}
            +d_i( x_{i,1}-\gamma_i x_{i,2})+\sum\limits_{j\ne i}c_{i,j}x_{i,2}x_{j,1} \\
    \st &x_{i,2}\ge0,\  x_{i,1}\le b_i,\\
    & 0\le x_{i,1}-\gamma_i x_{i,2}\le E_i.
  \end{array} \right.
\]
We consider the general cases that $b_i\ne E_i$.
The Lagrange multipliers can be expressed as
\be \nn
\begin{array}{llr}
  \lambda_{i,4}=&\frac{1}{(b_i-E_i)E_i}\left(\frac{\partial f_i}{\partial x_{i,2}}x_{i,2}(x_{i,1}
         -\gamma_ix_{i,2})-\frac{\partial f_i}{\partial x_{i,1}}(b_i-x_{i,1})(x_{i,1}-\gamma_ix_{i,2})\right),\\
  \lambda_{i,3}=&\frac{1}{b_i}\left((b_i-x_{i,1})(\frac{\partial f_i}{\partial x_{i,1}}+\lambda_{i,4})-x_{i,2}(\frac{\partial f_i}{\partial x_{i,2}}-\gamma_i\lambda_{i,4}) \right),\\
  \lambda_{i,2}=&\lambda_{i,3}-\lambda_{i,4}-\frac{\partial f_i}{\partial x_{i,1}},\\
  \lambda_{i,1}=&\frac{\partial f_i}{\partial x_{i,2}}+\gamma_i\lambda_{i,3}-\gamma_i\lambda_{i,4}.
\end{array}
\ee
We solve the NEP for the following typical parameters:
\[
\begin{array}{llllll}
b_1=1.5, & b_2=2, & b_3=1.8, & c_{1,2}=0.2, & c_{1,3}=0.3, & c_{2,1}=0.4,\\
c_{2,3}=0.2, & c_{3,1}=0.5, & c_{3,2}=0.1, & d_1=0.8, & d_2=1.2, & d_3=1.0,\\
E_1=3,  & E_2=4, & E_3=2,  & \gamma_1=0.7, & \gamma_2=0.5, & \gamma_3=0.9.
\end{array}
\]
By Algorithm~\ref{ag:allKKTSDP}, we got the unique NE
\[
x_1^*=(0.7000,0.1600), \quad x_2^* = (0.8000,0.1600), \quad x_3^* = (0.8000,0.4700),
\]
with the accuracy parameter $-1.1059\cdot 10^{-9}$.
It took about $10$ seconds.
\end{example}

\begin{example}\rm
Consider the NEP of the electricity market problem \cite{Contreras2004}.
There are three generating companies, and the $i$th
company possesses $s_i$ generating units.
For the $i$th company, the power generation of his
$j$th generating unit is denoted by $x_{i,j}$.
Assume $0\le x_{i,j}\le E_{i,j}$, where the nonzero parameter $E_{i,j}$
represents its maximum capacity, and the cost of this generating unit is $\frac{1}{2}c_{i,j}(x_{i,j})^2+d_{i,j}x_{i,j}$,
where $c_{i,j},d_{i,j}$ are parameters.
The electricity price is given by
$$\phi(\allx) \,  \coloneqq  \, b-a(\sum_{i=1}^3\sum_{j=1}^{s_i}x_{i,j}).$$
The aim of each company is to maximize its profits, that is,
to solve the following optimization problem:
\[
\baray{l}
\mbox{$i$th player:} \left\{
 \begin{array}{cl}
 \min\limits_{x_i \in \re^{s_i}} & \frac{1}{2}\sum_{j=1}^{s_i}(c_{i,j}(x_{i,j})^2+d_{i,j}x_{i,j})-\phi(\allx)(\sum_{j=1}^{s_i}x_{i,j}).\\
\st  & 0\le x_{i,j}\le E_{i,j} \ (j\in[s_i]).
  \end{array}\right.
\earay
\]
The Lagrange multipliers associated to the constraints
$g_{i,2j-1} \coloneqq E_{i,j}-x_{i,j}\ge0, \, g_{i,2j} \coloneqq x_{i,j}\ge0$
can be represented as
$$\lambda_{i,2j-1}=-\frac{\partial f_i}{\partial x_{i,j}}\cdot x_{i,j}/E_{i,j}, \, \lambda_{i,2j}=\frac{\partial f_i}{\partial x_{i,j}}+\lambda_{i,2j-1}.\,(j\in [s_i])$$
For the following parameters
\be \nn
\begin{array}{llllll}
 s_i=i,&&a=1,&& b=10,\\
 c_{1,1}=0.4, & c_{2,1}=0.35, & c_{2,2}=0.35, & c_{3,1}=0.46, & c_{3,2}=0.5, & c_{3,3}=0.5, \\
 d_{1,1}=2,   & d_{2,1}=1.75, & d_{2,2}=1,    & d_{3,1}=2.25, & d_{3,2}=3,   & d_{3,3}=3,  \\
 E_{1,1}=2,   & E_{2,1}=2.5,  & E_{2,2}=0.67, & E_{3,1}=1.2,  & E_{3,2}=1.8, & E_{3,3}=1.6,
\end{array}
\ee
we ran Algorithm~\ref{ag:allKKTSDP} and found the unique NE
\[
x_1^* = 1.7184, \quad x_2^* = (1.8413,0.6700), \quad
x_3^* = (1.2000,0.0823,0.0823).
\]
The accuracy parameter is $-5.1183\cdot 10^{-7}$.
It took about $8$ seconds.
\end{example}

\section{Conclusions and Discussions}
\label{sc:conc}

This paper studies Nash equilibrium problems that are given by polynomial functions.
Algorithms \ref{ag:KKTSDP} and \ref{ag:allKKTSDP} are proposed for
computing one or all NEs.
The Moment-SOS hierarchy of semidefinite relaxations is used
to solve the appearing polynomial optimization problems.
Under generic assumptions, we can compute a Nash equilibrium if it exists,
and detect its nonexistence if there is none.
Moreover, we can get all Nash equilibria if there are finitely many ones of them.

In \cite{Nie2023convex}, a semidefinite relaxation method using rational
and parametric Lagrange multiplier expressions is proposed for solving convex GNEPs.
Under some general conditions, the method in \cite{Nie2023convex}
is guaranteed to find one GNE or detect nonexistence of GNEs.
The NEPs considered in this work are special cases of GNEPs,
since they can be viewed as GNEPs where every player's feasible set
is independent of other players' strategies.
Moreover, for convex NEPs, Algorithm~\ref{ag:KKTSDP} reduces to
\cite[Algorithm~5.3]{Nie2023convex} and terminates at Step~2 in the first loop,
as shown in Corollary~\ref{cor:cvxnep}.
In contrast, this paper mainly focuses on solving nonconvex NEPs,
and the main difficulty of problems
in the scope of this paper is brought by nonconvexity.
Major differences between contributions in this paper
and those in \cite{Nie2023convex} are as follows:

\begin{itemize}

\item In this paper, we primarily focus on nonconvex NEPs of polynomials.
One of our main contributions in this work is that
we proposed an algorithm that finds NEs
for nonconvex NEPs, if they exist.
Note when there is no convexity being assumed, every block $x^*_i$
of the NE $x^*$ is the global minimizer for $\mathbf{F}_i(\xmi^*)$,
which is usually nonconvex.
For nonconvex NEPs, the KKT conditions are typically
not sufficient for global optimality,
thus the updating scheme $\mcal{K}_i\, \coloneqq \,\mcal{K}_i\cup\mcal{U}_i$ in Step~3 of Algorithm~\ref{ag:KKTSDP} is applied to preclude KKT points that are not NEs.
Therefore, we usually need to solve a sequence of
polynomial optimization problems to get NEs.
In comparison, the \cite{Nie2023convex} concerns GNEPs
where every player solves a convex optimization problem.
Therefore, once a KKT point is obtained
with some constraint qualification conditions
being satisfied, this KKT point must be a GNE.
So there is no need to preclude any KKT point,
and we usually only need to solve one polynomial optimization problem for a GNE.
Indeed, convex NEPs are studied in Section~\ref{ssc:convex},
which is the intersection of problems considered in this work
and in \cite{Nie2023convex}.
One can easily see that it is way more difficult to solve NEPs
without any convexity assumption from our discussion in
Sections~\ref{sc:alg} and \ref{ssc:convex}.

\item The goal of the method in \cite{Nie2023convex} is to find just one GNE,
and it cannot check whether the computed GNE is unique or not.
In comparison, Algorithm~\ref{ag:allKKTSDP} proposed in Section~\ref{sc: all}
aims to find more NEs.
Furthermore, when there are finitely many NEs, Algorithm~\ref{ag:allKKTSDP}
can find all NEs and check the completeness of the computed solution set,
under some general conditions.
We would like to remark that there is no other numerical method that can
achieve such computational goals for general NEPs given by polynomials,
to the best of the authors' knowledge.

\item Algorithms~\ref{ag:KKTSDP} and \ref{ag:allKKTSDP}
assume that all constraining polynomial tuples $g_i$ are nonsingular,
so that there exist polynomial expressions for Lagrange multipliers.
When the NEP is given by generic polynomials,
nonsingularity is satisfied for all $i\in[N]$.
However, polynomial Lagrange multiplier expressions typically do not exist for GNEPs.
For such cases, one may consider the corresponding
Lagrange multipliers as new variables,
but this is often computationally expensive,
especially when there are a lot of constraints.
In \cite{Nie2023convex},
rational and parametric Lagrange multiplier expressions
are studied for solving convex GNEPs.
For NEPs, when constraints are singular, rational and
parametric Lagrange multiplier expressions can also be applied to find NEs.
Nonetheless, convergence results in Theorem~\ref{tm:finiteconv}
and Corollary~\ref{cor:allNE} may no longer hold,
since there may exist NEs that are not KKT points
when polynomial expressions for Lagrange multipliers do not exist.
\end{itemize}

There is much interesting future work to do.
If there are only finitely many KKT points that are not NEs,
Algorithm~\ref{ag:KKTSDP} must terminate within finitely many loops.
This is shown in Theorem~\ref{tm:finiteconv}.
For generic NEPPs, the finiteness of KKT points
is shown in Theorem~\ref{tm:genfinite}.
However, the convergence property of Algorithm~\ref{ag:KKTSDP}
is not known when there are infinitely many KKT points.
In Example~\ref{ep:infiKKT}(ii),
there are infinitely many KKT points that are not NEs,
but Algorithm~\ref{ag:KKTSDP} is still able to get an NE in a few loops.
If there are infinitely many KKT points that are not NEs,
does Algorithm~\ref{ag:KKTSDP} still converge to find an NE?
This question is mostly open to the authors.

It is important to compute NEs efficiently for large-scale NEPs.
Even for unconstrained NEPs,
the $k$th order moment relaxation for (\ref{eq:KKTfeasopt})
is a semidefinite program with $\mcal{O}(n^{2k})$ variables.
Algorithm~\ref{ag:KKTSDP} may not be computationally practical
for solving large-scale NEPs.
Sparse polynomial optimization problems are studied in \cite{Las06sparse,Nie09sparse,Waki06,Wang21chordal,Wang21TSSOS,Wang21CSTSSOS}.
Recently, the software {\tt TSSOS} \cite{Morgan2021}
that implements the term and correlative sparse
Moment-SOS relaxations is developed.
In Algorithms~\ref{ag:KKTSDP} and \ref{ag:allKKTSDP},
polynomial optimization problems
are formulated to find NEs, and one may implement sparse Moment-SOS relaxations
for solving these polynomial optimization problems.
However, even for the NEPP where each player's
optimization problem $\mathbf{F}_i(\xmi)$ is sparse,
the polynomial optimization problem (\ref{eq:KKTfeasopt}) may not be sparse.
This is because both the polynomial expressions of Lagrange multipliers
and the KKT system may consist of dense polynomials
(see \cite{Qu2022} for more details).
Therefore, how to exploit sparsity to find NEs efficiently
for large-scale NEPs is important for future work.

Nonconvex NEPs may or may not have NEs, even if
all feasible sets are compact.
For each $i\in[N]$, let $\mcal{B}_i$ be the set of
Borel probability measures supported in $X_i$.
Define the measure function
\[
\Gamma_i(\mu_1, \ldots, \mu_N) \,  \coloneqq  \,
\int_{X_1} \cdots \int_{X_{N}} f_i(x_1,\ldots,x_N) d\mu_1 \cdots d\mu_N .
\]
The mixed strategy extension for the NEP \reff{eq:NEP} is to find $(\mu_1^*,\ldots,\mu_N^*)\in\mcal{B}_1\times\cdots\times\mcal{B}_N$ such that
\be \label{eq:mixed}
\Gamma_i(\mu_1^*, \ldots, \mu_{i-1}^*, \mu_i^*, \mu_{i+1}^*, \mu_N^*)  \, \le \,
\Gamma_i(\mu_1^*, \ldots, \mu_{i-1}^*, \mu_i, \mu_{i+1}^*, \mu_N^*)
\ee
holds for all $i\in[N]$ and for all $\mu_i\in\mcal{B}_i$.
Such a $(\mu_1^*,\ldots,\mu_N^*)$ is called a {\it mixed strategy solution}
and it always exists \cite{Glicksberg1952}.
Mixed strategy solutions to finite games are studied in
\cite{Ahmadi2020, Aubin13, DasGoldPap09, Kontogiannis2006,Nash1951,Young2014}.
The mixed strategy extensions of general continuous NEPs
are typically difficult to solve because it is a computational challenge
to do operations with measures.
However, when the functions are polynomials,
the mixed strategy extension can be equivalently expressed
in terms of moment variables.
We discuss how this can be done in the following.

For the NEPP \reff{eq:NEP}, let $a_{i,j}$
be the degree of $f_i$ in $x_j$ and let
\[ b_j \,= \, \max\{a_{1,j},\ldots,a_{N,j}\} . \]
Let $T^{(i)}$ be the $N$th order tensor such that
for all $u_j = [x_j]_{b_j}$ and $j\in[N]$,
\[
f_i(\allx) \,= \, T^{(i)}(u_1,\ldots, u_N) \,  \coloneqq
\sum_{k_1,\ldots,k_N}T^{(i)}_{k_1,\ldots,k_N}(u_1)_{k_1} \ldots (u_N)_{k_N}.
\]
Denote the set $\mcal{X}_i \coloneqq \{[x_i]_{b_i}:x_i\in X_i\}.$
Let $\cvx(\mcal{X}_i)$ be the convex hull of $\mcal{X}_i$.
For a probability measure $\mu_i\in\mcal{B}_i$,
if $u_j = \int_{X_i} [x_j]_{b_j} d\mu_i$,
then we have $u_j\in\cvx(\mcal{X}_i)$ (see \cite{HKL20,LasBk15,Lau09}).
Since $f_i$ is a polynomial, for every $(\mu_1,\ldots,\mu_N)\in\mcal{B}_1\times\ldots\times\mcal{B}_N$,
there exists
$(u_1,\dots,u_N)\in\cvx({\mcal{X}_1})\times\ldots\times\cvx({\mcal{X}_N})$
such that
\be \label{eq:int=mom}
\int_{X_1}\ldots\int_{X_N} f_i(x_1, \ldots, x_N) d\mu_1\ldots d\mu_N
\, = \,  T^{(i)}(u_1,\ldots, u_N) .
\ee
Conversely, for each
$(u_1,\dots,u_N)\in\cvx({\mcal{X}_1})\times\ldots\times\cvx({\mcal{X}_N})$,
there exist probability measures $\mu_1,\dots,\mu_N$
such that each $\mu_i\in\mcal{B}_i$ and (\ref{eq:int=mom}) holds.
Therefore, the mixed strategy extension of the NEPP \reff{eq:NEP}
is equivalent to its convex moment relaxation: find a tuple
\[
(u_1^*,\ldots,u_N^*)\in
\cvx({\mcal{X}_1})\times\ldots\times\cvx({\mcal{X}_N})
\]
such that for each $i=1,\ldots,N$,
\[
T^{(i)}(u_1^*,\ldots,u_{i-1}^*,u_i,u_{i+1}^*,\ldots,u_N^*)
\ge T^{(i)}(u_1^*,\ldots,u_N^*)
\]
for all $u_i\in\cvx(\mcal{X}_i)$.
Moreover, if each $u_i^*$ is an extreme point of $\cvx(\mcal{X}_i)$,
then one can get an NE for the original NEPP from $(u_1^*,\ldots,u_N^*)$.
We refer to \cite{LarLas12} for moment game problems,
and \cite{dresher2016polynomial,parrilo2006polynomial,stein2008separable}
for more details on mixed-strategy solutions to polynomial games.

\bigskip\noindent
{\bf Acknowledgements}
The authors would like to thank the editors and anonymous referees
for fruitful suggestions. The first author is partially supported by
the NSF grant DMS-2110780.

\appendix
\appendixpage
\addappheadtotoc
\section{Finiteness of KKT points for generic NEPPs}
\label{sc:finite}

The finiteness of KKT points implies that
Algorithms~\ref{ag:KKTSDP} and \ref{ag:allKKTSDP}
has finite termination.
In the following, we discuss the finiteness of KKT points for generic NEPPs.

After the enumeration of all possibilities of active inequality constraints,
we can generally consider the case that \reff{eq:subprob} only has equality constraints.
Consequently, the length $m_i$ of the $i$th player's constraining polynomials
can be assumed less than or equal to $n_i$, the dimension of its strategy $x_i$.
To prove the finiteness, we can ignore the sign conditions $\lmd_{i,j} \ge 0$
for Lagrange multipliers.
Then the KKT system for all players is
\be
\label{KKTsym:allplayers}
\left\{
\begin{array}{rcll}
\sum_{j=1}^{m_i} \lambda_{i,j}  \nabla_{x_i} g_{i,j}(x_i) &=&
        \nabla_{x_i} f_i(\allx) & (i \in [N]),  \\
 g_{i,j}(x_i) & = & 0 & (i \in [N], j \in  [m_i]) .
\end{array}
\right.
\ee
When the objectives $f_i$ are generic polynomials in $\allx$
and each $g_{i,j}$ is a generic polynomial in $x_i$,
we show that \reff{KKTsym:allplayers}
has finitely many complex solutions.

\begin{theorem} \label{tm:genfinite}
Let $d_{i,j}>0$, $a_{i,j} >0$ be degrees for all $i \in [N]$ and $j \in  [m_i]$.
If each $g_{i,j}$ is a generic polynomial in $x_i$ of degree $d_{i,j}$,
and each $f_i$ is a generic polynomial in $\allx$,
whose degree in $x_j$ is $a_{i,j}$, then the KKT system
\reff{KKTsym:allplayers} has finitely many complex solutions
and hence the NEP has finitely many KKT points.
\end{theorem}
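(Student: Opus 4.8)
The plan is to run a parametric incidence-variety argument over the space of all admissible data. Let $\mathcal{P}$ be the affine space whose coordinates are the coefficients of the $f_i$ (each $f_i$ ranging over polynomials in $\allx$ with $\deg_{x_j} f_i\le a_{i,j}$ for every $j$) together with the coefficients of the $g_{i,j}$ (each $g_{i,j}$ ranging over polynomials in $x_i$ of degree $\le d_{i,j}$); ``generic'' then means ``outside a proper algebraic subset of $\mathcal{P}$'', which is Lebesgue null. Keeping the multipliers as free unknowns $\lambda=(\lambda_{ij})\in\mathbb{C}^M$ with $M=\sum_i m_i$, I would form the incidence variety
\[
\mathcal{I}\ :=\ \big\{\, ((\allx,\lambda),p)\in(\mathbb{C}^n\times\mathbb{C}^M)\times\mathcal{P}\ :\ \text{system \reff{KKTsym:allplayers} with data }p\text{ holds at }(\allx,\lambda)\,\big\},
\]
and study its two projections $\rho:\mathcal{I}\to\mathbb{C}^n\times\mathbb{C}^M$ and $\pi:\mathcal{I}\to\mathcal{P}$.

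The key step is the analysis of $\rho$. For a fixed point $v=(\allx,\lambda)$, every equation of \reff{KKTsym:allplayers} is \emph{linear} in the coefficient vector $p$, because the KKT equations depend linearly on each $f_i$ and on each $g_{i,j}$ and differentiation and evaluation are linear; moreover the system block-decomposes across players, since player $i$'s $n_i+m_i$ equations involve only the coefficients of $f_i$ and of $g_{i,1},\dots,g_{i,m_i}$. I would show this player-$i$ linear system has full row rank $n_i+m_i$ for \emph{every} $v$. Restricting the $n_i$ first-order equations to the coefficients of $f_i$ produces the functionals $p\mapsto (\partial f_i/\partial x_{i,k})(\allx)$, $k=1,\dots,n_i$; these are linearly independent for all $\allx$ because the coefficient of the linear monomial $x_{i,k}$ (an admissible monomial since $a_{i,i}\ge1$) contributes the identity block, uniformly in $\allx$. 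Restricting the $m_i$ constraint equations to the coefficients of $g_{i,j}$ gives the functionals $p\mapsto g_{i,j}(x_i)$, each nonzero (the constant term of $g_{i,j}$ contributes with coefficient $1$) and supported on disjoint coefficient blocks. A one-line elimination — project any dependence relation first onto the $f_i$-coefficients, then onto each $g_{i,j}$-block — then shows all $n_i+m_i$ functionals are independent. Hence $\rho$ is surjective with all fibers linear subspaces of constant dimension $\dim\mathcal{P}-(n+M)$, so $\mathcal{I}$ is irreducible with $\dim\mathcal{I}=\dim\mathcal{P}$ (indeed $\mathcal{I}$ is the total space of a vector subbundle of the trivial bundle over $\mathbb{C}^n\times\mathbb{C}^M$).

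Now I would turn to $\pi:\mathcal{I}\to\mathcal{P}$. Since $\mathcal{I}$ is irreducible and $\dim\mathcal{I}=\dim\mathcal{P}$, there are only two possibilities. If $\pi$ is dominant, the theorem on fiber dimension gives $\dim\pi^{-1}(p)=0$, i.e.\ $\pi^{-1}(p)$ finite, for all $p$ in a dense Zariski-open set. If $\pi$ is not dominant, then $\overline{\pi(\mathcal{I})}$ is a proper subvariety of $\mathcal{P}$ and $\pi^{-1}(p)=\emptyset$ for generic $p$. Either way there is a nonempty Zariski-open $\mathcal{U}\subseteq\mathcal{P}$ such that for $p\in\mathcal{U}$ the system \reff{KKTsym:allplayers} has only finitely many complex solutions $(\allx,\lambda)$; dropping the $\lambda$-coordinates yields finitely many complex KKT points, and intersecting over the finitely many active-inequality patterns described before the theorem gives the finiteness assertion for the NEP.

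The whole weight of the argument sits in the uniform full-rank claim for $\rho$ — i.e.\ checking that the KKT equations stay independent as functionals on the coefficient space at \emph{every} $(\allx,\lambda)$, including degenerate points where some $x_{i,k}$ or $\lambda_{ij}$ vanishes. The device that makes this painless is retaining the multipliers $\lambda$ as variables, which keeps everything linear in the data and makes $\mathcal{I}$ an honest vector bundle with equidimensional fibers; the natural temptation to eliminate $\lambda$ first (say, by passing to maximal-minor conditions) would break linearity in the data and require a substantially more delicate transversality analysis, so I would avoid it.
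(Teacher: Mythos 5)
Your argument is correct, and it proves the stated theorem by a genuinely different route from the paper. The paper first eliminates the multipliers by replacing the stationarity equations with the rank condition $\rank [\nabla_{x_i} f_i \mid \nabla_{x_i} g_{i,1} \cdots \nabla_{x_i} g_{i,m_i}] \le m_i$, multi-homogenizes everything, and then computes the codimension of $\mc{U}\cap \widetilde{W}_1\cap\cdots\cap\widetilde{W}_N$ inductively, player by player, by observing that on each chart $\mc{X}_I$ the vanishing of the relevant $(m_i+1)\times(m_i+1)$ minors imposes $n_i-m_i$ conditions that are \emph{linear} in the Veronese-type vector $\mathfrak{m}_i(\tilde{\allx})$ with coefficients linearly parameterized by the coefficients of $f_i$, hence cut a birational image of the variety by generic hyperplanes. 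Your incidence-variety argument instead keeps $\lambda$ as unknowns precisely so that every KKT equation stays linear in the data, verifies the uniform full-rank claim for the fibers of $\rho$ (the identity block coming from the linear monomials $x_{i,k}$ of $f_i$, available since $a_{i,i}\ge 1$, plus the constant terms of the $g_{i,j}$ on disjoint blocks, plus the block decomposition across players), and then invokes the theorem on fiber dimension for $\pi:\mathcal{I}\to\mathcal{P}$ with $\dim\mathcal{I}=\dim\mathcal{P}$. Your version is shorter and more elementary: no projective closures, no determinantal varieties, no birationality bookkeeping. What the paper's approach buys in exchange is control at infinity (zero-dimensionality is established in $\mathbb{P}^{n_1}\times\cdots\times\mathbb{P}^{n_N}$, not just in affine space) and explicit degree data that could in principle yield Bezout-type counts of the KKT points; it also works directly with the set of points $\allx$ rather than pairs $(\allx,\lambda)$. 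For the finiteness statement as written, both suffice, and your handling of the inequality constraints by enumerating active sets matches the reduction the paper performs before the theorem.
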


\begin{proof}
For each player $i=1, \ldots, N$, denote
\[
b_i  \coloneqq  a_{i,i}-1+d_{i,1} + \cdots + d_{i,m_i} - m_i,
\]
\[
\tilde{x}_i  \coloneqq (x_{i,0},x_{i,1},\ldots,x_{i,n_i}), \quad
\tilde{x}  \coloneqq  (\tilde{x}_1, \ldots,  \tilde{x}_N  ) .
\]
The homogenization of $g_{i,j}$
is $\tilde{g}_{i,j}$, a form in $\tilde{x}_i$.
Let $\mathbb{P}^{n_i}$ be the $n_i$ dimensional projective space over the complex field.
Consider the projective varieties
\[
\mathcal{U}_i  \coloneqq  \Big \{
(\tilde{x}_1,\ldots,\tilde{x}_N)\in
\mathbb{P}^{n_1}\times\ldots\times\mathbb{P}^{n_N}:
\tilde{g}_{i}(\tilde{x}_i)=0
\Big \},\ i =1,\ldots, N,
\]
\[
\mathcal{U} \coloneqq  \mcal{U}_1 \cap \cdots \cap \mcal{U}_N.
\]
When all $g_{i,j}$ are generic polynomials in $x_i$,
the codimension of $\mcal{U}_i$ is $m_i$ (see \cite{Harris1992}),
so $\mcal{U}$ has the codimension $m_1 + \cdots + m_N$.

The $i$th player's objective $f_i$ is a polynomial in
$\allx=(x_1, \ldots, x_N)$,
we denote the multi-homogenization of $f_i(\xpi,\xmi)$ as
\[
\tilde{f}_i(\tilde{x}_i,\tilde{x}_{-i}) \coloneqq
f_i(x_1/x_{1,0},\ldots,x_N/x_{N,0})\cdot(\prod_{j=1}^N ( x_{i,0} )^{a_{i,j}}).
\]
It is a multi-homogeneous polynomial in $\tilde{\allx}$.
For each $i$, consider the determinantal variety (the $\nabla_{x_i}$
denote the gradient with respect to $x_i$)
\[
W_i \, \coloneqq \, \left\{ \allx \in  \cpx^n\left|
\begin{array}{cccc}
\rank [ \, \nabla_{x_i} f_i(\allx) & \nabla_{x_i} g_{i,1}(x_i)
           & \cdots & \nabla_{x_i} g_{i,m_i}(x_i) \, ] \le m_i
\end{array}
\right.\right\}.
\]
Its multi-homogenization is
\[\widetilde{W}_i \, \coloneqq \, \left\{ \tilde{\allx}  \left|
\begin{array}{cccc}
  \rank [ \nabla_{x_i} \tilde{f}_i(\tilde{\allx}) & \nabla_{x_i} \tilde{g}_{i,1}(\tilde{x}_i)
              & \cdots   & \nabla_{x_i} \tilde{g}_{i,m_i} (\tilde{x}_i)] \le m_i
\end{array}
\right.\right\}.
\]
The matrix in the above can be explicitly written as
\[
J_i(\tilde{x}_i, \tilde{x}_{-i}) \,  \coloneqq  \,
\left[\begin{array}{cccc}
\pt_{x_{i,1}} \tilde{f}_i(\tilde{\allx}) & \pt_{x_{i,1}} \tilde{g}_{i,1}(\tilde{x}_i)
              & \cdots   & \pt_{x_{i,1}} \tilde{g}_{i,m_i}(\tilde{x}_i)  \\
\pt_{x_{i,2}} \tilde{f}_i(\tilde{\allx}) & \pt_{x_{i,2}} \tilde{g}_{i,1}(\tilde{x}_i)
              & \cdots   & \pt_{x_{i,2}} \tilde{g}_{i,m_i} (\tilde{x}_i) \\
 \vdots &  \vdots & \ddots &  \vdots \\
\pt_{x_{i,n_i}} \tilde{f}_i(\tilde{\allx}) & \pt_{x_{i,n_i}} \tilde{g}_{i,1}(\tilde{x}_i)
              & \cdots   & \pt_{x_{i,n_i}} \tilde{g}_{i,m_i}(\tilde{x}_i)  \\
\end{array}
\right] .
\]
The $(m_i+1)$-by-$(m_i+1)$ minors of the matrix $J_i$
are homogeneous in $\tilde{x}_i$ of degree $b_i$.
They are homogeneous in $\tilde{x}_j$ of degree $a_{i,j}$, for $j \ne i$.
By \cite[Proposition~2.1]{nie2009},
when $g_{i,j}$ are generic polynomials in $x_i$,
the right $m_i$ columns of $J_i$ are linearly independent for all
$\tilde{x_i} \in \mcal{U}_i$. That is, for every $\tilde{x} \in \mcal{U}_i$,
there must exist a nonzero $m_i$-by-$m_i$ minor from the right
$m_i$ columns of $J_i$. In the following,
we consider fixed generic polynomials $g_{i,j}$.

First, we show that $\mcal{U} \cap \widetilde{W}_1$
have the codimension $n_1 + m_2 + \cdots + m_N$.
Let $\mcal{V}$ be the projective variety consisting of all
equivalent classes of the vectors
\be \label{proj:kron:all:xi}
\mathfrak{m}_1(\tilde{\allx})  \coloneqq
[\tilde{x}_1]_{b_1}^{hom} \otimes
[\tilde{x}_2]_{a_{1,2}}^{hom}  \otimes \cdots
\otimes [\tilde{x}_N]_{a_{1,N}}^{hom},
\ee
for equivalent classes of $\tilde{x} \in \mcal{U}$.
In the above, $\otimes$ denotes the Kronecker product,
$[u]_d^{hom}$ denotes the vector of
all monomials in $u$ of degrees equal to $d$.
In other words, $[u]_d^{hom}$ is the subvector of $[u]_d$
for monomials of the highest degree $d$.
Note that $\mcal{U}$ is birational to $\mcal{V}$
(consider the natural embedding $\varphi:\mcal{U}\hookrightarrow\mcal{V}$ such that $\phi(\tilde{\allx})=\mathfrak{m}_1(\tilde{\allx})$).
So $\mcal{U}$ and $\mcal{V}$ have the same codimension \cite{Shafarevich2013}.
For each subset $I \subseteq [n_1]$ of cardinality $m_1$,
we use $\det_{I} J_1$ to denote the $m_1$-by-$m_1$ minor of $J_1$
for the submatrix whose row indices are in $I$
and whose columns are the right hand side $m_1$ columns. Then
\[
\widetilde{W}_1 \quad = \bigcup_{I \subseteq [n_1], |I| = m_1}  \mcal{X}_{I}
\quad \mbox{where}
\]
\[
\mcal{X}_{I}  \coloneqq  \{ \tilde{\allx} :
\rank\, J_1(\allx) \le m_1, \, \mbox{det}_{I} J_1(\allx) \ne 0 \}.
\]
For each $I$, we have $\tilde{\allx} \in \mcal{X}_{I}$
if and only if the $(m_1+1)$-by-$(m_1+1)$ minors of $J_1$,
corresponding to the row indices $I \cup \{\ell\}$ with $\ell \in [n_1] \backslash I$,
are equal to zeros. There are totally $n_1 - m_1$ such minors.
Vanishing of these $(m_1+1)$-by-$(m_1+1)$ minors of $J_1$
gives $n_1 - m_1$ linear equations in the vector
$\mathfrak{m}_1(\tilde{\allx})$ as in \reff{proj:kron:all:xi}.
The coefficients of these linear equations are linearly
parameterized by coefficients of $f_1$. Therefore,
when $f_1$ has generic coefficients, the set
\[
\mcal{Y}_I  \coloneqq
\{\mathfrak{m}_1(\tilde{\allx}) : \tilde{\allx} \in \mcal{X}_{I}  \cap \mcal{U}  \}
\]
is the intersection of $\mcal{V}$ with hyperplanes given by $n_1 - m_1$
generic linear equations. Since $\mcal{X}_{I}  \cap \mcal{U}$
is birational to $\mcal{Y}_I$, they have the same codimension,
so the codimension of $\mcal{X}_{I}  \cap \mcal{U}$
is $n_1 + m_2 + \cdots + m_N$.
This conclusion is true for all the above subsets $I$. Since
\[
\mcal{U} \cap \widetilde{W}_1 \quad  =
\bigcup_{I \subseteq [n_1], |I| = m_1}  \mcal{X}_{I} \cap \mcal{U}  ,
\]
the codimension of $\mcal{U} \cap \widetilde{W}_1$ is equal to $n_1 + m_2 + \cdots + m_N$.

Second, we repeat the above argument to show that
\[
( \mcal{U} \cap \widetilde{W}_1 ) \cap \widetilde{W}_2
\]
has codimension $n_1 + n_2 + m_3 + \cdots + m_N$.
Let $\mcal{V}'$ be the projective variety
consisting of all equivalent classes of the vectors
\be
\mathfrak{m}_2(\tilde{\allx}) \,  \coloneqq  \,
[\tilde{x}_1]_{a_{2,1}}^{hom} \otimes
[\tilde{x}_2]_{b_2}^{hom} \otimes
[\tilde{x}_3]_{a_{2,3}}^{hom} \otimes \cdots
\otimes [\tilde{x}_N]_{a_{2,N}}^{hom}
\ee
for equivalent classes of $\tilde{x} \in \mcal{U} \cap \widetilde{W}_1$.
Note that $\mcal{U} \cap \widetilde{W}_1$ is birational to $\mcal{V}'$.
They have the same codimension. Similarly, we have
\[
\widetilde{W}_2 \quad = \bigcup_{I \subseteq [n_2], |I| = m_2}  \mcal{X}'_{I}
\quad \mbox{where}
\]
\[
\mcal{X}'_{I} \, \coloneqq  \, \{ \tilde{\allx} :
\rank\, J_2(\allx) \le m_2, \, \mbox{det}_{I} J_2(\allx) \ne 0 \}.
\]
When $f_2$ has generic coefficients, the set
\[
\mcal{Y}'_I  \coloneqq
\{\mathfrak{m}_2(\tilde{\allx}) : \tilde{\allx} \in \mcal{X}'_{I}
\cap \mcal{U} \cap \widetilde{W}_1  \}
\]
is the intersection of $\mcal{V}'$ with $n_2 - m_2$
generic hyperplanes of codimension $1$.
Since $\mcal{X}'_{I}  \cap \mcal{U} \cap \widetilde{W}_1$
is birational to $\mcal{Y}'_I$, they have the same dimension,
so the codimension of $\mcal{X}'_{I}  \cap \mcal{U} \cap \widetilde{W}_1$
is $n_1 + n_2 + m_3 + \cdots + m_N$.
This conclusion is true for all $\mcal{Y}'_I$.
Last, because
\[
\mcal{U} \cap \widetilde{W}_1 \cap \widetilde{W}_2  \quad =
\bigcup_{I \subseteq [n_2], |I| = m_2}  \mcal{X}'_{I} \cap \mcal{U} \cap \widetilde{W}_1,
\]
we know $\mcal{U} \cap \widetilde{W}_1 \cap \widetilde{W}_2$
has the codimension $n_1 + n_2 + m_3 + \cdots + m_N$.

Similarly, by repeating the above, we can eventually show that
\[
\mcal{U} \cap \widetilde{W}_1 \cap \widetilde{W}_2 \cap \cdots  \cap \widetilde{W}_N
\]
has codimension $n_1 + n_2  + \cdots + n_N$.
This implies the KKT system \reff{KKTsym:allplayers}
has codimension $n_1 + n_2 + \cdots + n_N$,
i.e., the dimension of the solution set of \reff{KKTsym:allplayers} is zero.
So, there are finitely many complex KKT points.
\end{proof}

\end{document}